\let\pa\partial
\let\na\nabla
\let\eps\varepsilon
\newcommand{\N}{{\mathbb N}}
\newcommand{\R}{{\mathbb R}}
\newcommand{\diver}{\operatorname{div}}
\newcommand{\simplex}{\mathcal{D}}
\newtheorem{theorem}{Theorem}
\newtheorem{lemma}[theorem]{Lemma}
\newtheorem{definition}{Definition}
\begin{document}

\title[Weak--strong uniqueness for cross-diffusion systems]{Weak--strong uniqueness for general \\ cross-diffusion systems with volume filling} 

\author[M. Heitzinger]{Maria Heitzinger}
\address{Institute of Analysis and Scientific Computing, TU Wien, Wiedner Hauptstra\ss e 8--10, 1040 Wien, Austria}
\email{maria.heitzinger@tuwien.ac.at} 

\author[A. J\"ungel]{Ansgar J\"ungel}
\address{Institute of Analysis and Scientific Computing, TU Wien, Wiedner Hauptstra\ss e 8--10, 1040 Wien, Austria}
\email{juengel@tuwien.ac.at} 

\date{\today}

\thanks{The authors acknowledge partial support from   
the Austrian Science Fund (FWF), grant 10.55776/F65, and from the Austrian Federal Ministry for Women, Science and Research and implemented by \"OAD, project MultHeFlo. This work has received funding from the European Research Council (ERC) under the European Union's Horizon 2020 research and innovation programme, ERC Advanced Grant NEUROMORPH, no.~101018153. For open-access purposes, the authors have applied a CC BY public copyright license to any author-accepted manuscript version arising from this submission.} 

\begin{abstract}
The weak--strong uniqueness of solutions to a broad class of cross-diffusion systems with volume filling is established. In general, the diffusion matrices are neither symmetric nor positive definite. This issue is overcome by supposing that the equations possess a Boltzmann-type entropy structure, which ensures the existence of bounded weak solutions. In this framework, general conditions on the mobility matrix are identified that allow for the proof of the weak--strong uniqueness property by means of the relative entropy method. The core idea consists in analyzing an augmented mobility matrix that is positive definite only on a specific subspace. Several examples that meet the required assumptions are provided, together with a discussion on possible extensions. 
\end{abstract}

\keywords{Weak--strong uniqueness of solutions, cross-diffusion systems, relative entropy method, mobility matrix.}  
 
\subjclass[2000]{35A02, 35K51, 35K55, 35Q79, 35Q92.}

\maketitle


\section{Introduction}

Multicomponent systems in natural sciences, like gas mixtures, transport of charged particles, and population dynamics, can be described by systems of diffusion equations. When the flux of one component is induced by the gradient of the other components, the diffusion matrix is not diagonal -- a phenomenon known as cross-diffusion. In applications, the diffusion matrix is usually neither symmetric nor positive definite, but there exists an entropy structure that provides gradient estimates. We focus on cross-diffusion systems incorporating volume-filling effects, where particles or individuals have finite size, limiting the maximal achievable concentration. In such cases, many cross-diffusion models exhibit a Boltzmann entropy structure, which ensures the boundedness of solutions.

While the existence of global weak solutions can be proved under reasonable conditions by using, e.g., the boundedness-by-entropy method \cite{Jue15}, the uniqueness of weak solutions is a delicate and mainly open problem. Progress has been made so far only for systems with a special structure \cite{CaGu24,CLR17,GeJu18,ZaJu17}, restricting the size of the diffusion coefficients \cite{SeWi21}, or considering triangular systems \cite{BDD25,ChDu25}. The weak--strong uniqueness property allows for a study of more general problems. The aim is to show that a weak solution and a strong solution (if it exists) emanating from the same initial data coincide. Weak--strong uniqueness means that classical solutions are stable within the class of weak solutions. This property is usually proved by means of the relative entropy, which is a kind of ``nonlinear distance'' between the weak and strong solution. While there exist several proofs for particular systems in the literature, we present in this paper a unifying approach for weak--strong uniqueness. We show that the general proof can be applied to the models considered in the literature and to further models for which the weak--strong uniqueness property is new. 

\subsection{Setting}

The dynamics of the volume fractions $u_1,\ldots,u_n$ of a multicomponent system (gas components, population species, ionic compounds, etc.) is assumed to be governed by the cross-diffusion equations
\begin{align}\label{1.eq}
  \pa_t u_i = \diver\bigg(\sum_{j=1}^n A_{ij}(u)\na u_j\bigg)
  \quad\mbox{in }\Omega,\ t>0,\ i=1,\ldots,n,
\end{align}
with the initial and no-flux boundary conditions
\begin{align}\label{1.bic}
  u_i(0)=u_i^0\mbox{ in }\Omega, \quad
  \sum_{j=1}^n A_{ij}(u)\na u_j\cdot\nu=0\mbox{ on }\pa\Omega,\ t>0,
  \ i=1,\ldots,n,
\end{align}
where $\Omega\subset\R^d$ ($d\ge 1$) is a bounded domain, $\nu$ is the exterior unit normal vector of $\pa\Omega$, which is assumed to exist, and $u=(u_1,\ldots,u_n)$ is the solution vector. The volume fractions satisfy the volume-filling condition $\sum_{i=0}^n u_i=1$, where $u_0$ is the solvent in case of fluid mixtures or the void in case of population dynamics. Thus, we require that $u_i(t)\ge 0$ and $\sum_{i=1}^n u_i(t)\le 1$ in $\Omega$. This property is achieved by exploiting the entropy structure of equations \eqref{1.eq}. 

More precisely, we introduce the Boltzmann entropy density
\begin{align}\label{1.h}
  h(u) = \sum_{i=0}^n u_i(\log u_i-1), \quad u\in\simplex,\  u_0 = 1 - \sum_{i=1}^n u_i,
\end{align}
where $\simplex := \{u\in(0,1)^n: \sum_{i=1}^n u_i<1\}$. Then, with the so-called entropy variables $w_i=\pa h/\pa u_i=\log(u_i/u_0)$ for $i=1,\ldots,n$ and the mobility matrix $B(u):=A(u)h''(u)^{-1}\in\R^{n\times n}$, equations \eqref{1.eq} can be written equivalently as
\begin{align*}
  \pa_t u_i = \diver\bigg(\sum_{j=1}^n B_{ij}(u)\na w_j\bigg), 
  \quad i=1,\ldots,n.
\end{align*}
Solving this problem with respect to $w=(w_1,\ldots,w_n)$, the volume fractions $u_i$ are computed from $u_i=\exp w_i/(1+\sum_{j=1}^n\exp w_j)$, showing that $u(x,t)\in\simplex$ for a.e.\ $(x,t)\in\Omega_T:=\Omega\times(0,T)$ and consequently $u\in L^\infty(\Omega_T;\R^n)$. This idea was first used in \cite{BDPS10} and later generalized in \cite{Jue15}. The existence of a global bounded weak solution was proved in \cite[Theorem 2]{Jue15} under the following assumptions: $u^0\in L^1(\Omega;\R^n)$ is such that $u^0(x)\in\overline{\simplex}$ for a.e.\ $x\in\Omega$, $A\in C^0(\overline\simplex;\R^{n\times n})$, and there exist $0<s\le 1$ and $c_A>0$ such that 
\begin{align}\label{1.posdef}
  z^Th''(u)A(u)z \ge c_A\sum_{i=1}^n u_i^{2s-2}z_i^2
  \quad\mbox{for all }z\in\R^n,\ u\in\simplex.
\end{align}
The existence result also holds for $s>1$ assuming additionally that $u\mapsto A_{ij}(u)u_j^{1-s}$ is bounded, but we require $s\le 1$ in this paper.

The aim of this paper is to show that the weak--strong uniqueness property holds under rather natural assumptions.


\subsection{State of the art}

The uniqueness of weak solutions to certain cross-diffusion systems was proved by using Gajewski's entropy method, combined with the $H^{-1}(\Omega)$ method \cite{ZaJu17} or with $L^2(\Omega)$ estimations \cite{GeJu18}. However, these techniques are applicable only to cross-diffusion systems possessing a particular structure that significantly simplifies the problem. A uniqueness proof for weak solutions for a thin-film solar-cell model was shown in \cite{BBEP20,SeWi21}, assuming that the diffusion coefficients are sufficiently close to one another.

To allow for more general cross-diffusion systems, we adopt a weaker uniqueness concept. An example is the weak--strong uniqueness. This property was proved, for the first time, for the incompressible Navier--Stokes equations \cite{Pro59,Ser63} and later for reaction--diffusion systems \cite{Fis17}. The idea, which goes back to \cite{Daf79}, is to estimate the so-called relative entropy $H(u|v)$, which is a kind of distance between two functions $u$ and $v$. The relative Boltzmann entropy was first used to compare two probability measures \cite{MoKo81}. Later, it has been discovered that the relative entropy can be augmented to diffusion equations \cite{Tos96}. The idea is to show that $H(u|v)$ satisfies the inequality $(dH/dt)(u|v)\le CH(u|v)$ for some $C>0$, where $u$ and $v$ are solutions to the same evolution problem. If $u$ and $v$ have the same initial data then $H(u(0)|v(0))=0$, so Gronwall's inequality gives $H(u(t)|v(t))=0$ and hence $u(t)=v(t)$ for $t>0$. 

This technique has been applied to a number of cross-diffusion systems arising from applications, for instance \cite{ChJu19,Hop22,JPZ22,LaMa23} for population dynamics models, \cite{HiJu24} for Nernst--Planck-type systems (but using a different entropy), and \cite{DeZa24} for a fractional population model. The proofs require very technical truncation procedures, which can be avoided in volume-filling models. For these problems, weak--strong uniqueness was shown in \cite{HoBu22} for a thin-film solar-cell model, in \cite{JuMa24} for an ion-transport model, and in \cite{Liu25} for a volume-filling population model. In this paper, we show that the relative entropy method can be applied to a broad class of cross-diffusion systems possessing a Boltzmann entropy structure. 


\subsection{Key ideas}

The task is to differentiate the relative entropy 
\begin{align*}
  H(u|v) = \sum_{i=0}^n\int_\Omega\bigg(
  u_i\log\frac{u_i}{v_i}-u_i+v_i\bigg)dx
\end{align*}
with respect to time for a weak solution $u$ and a positive strong solution $v$ and to insert equations \eqref{1.eq}. The key idea is to consider the solvent concentration $u_0$ as an independent variable and to formulate the cross-diffusion equations for the augmented solution vector $\bar{u}=(u_0,u)\in[0,1]^{n+1}$, leading to
\begin{align*}
  \pa_t u_i = \diver\bigg(\sum_{j=0}^n \bar{B}_{ij}(\bar{u})
  \na\log u_j \bigg), \quad i=0,\ldots,n,
\end{align*}
where the augmented mobility matrix is defined by
\begin{align}\label{1.barB}
  \bar{B}_{ij}(\bar{u}) = \begin{pmatrix}
  \sum_{i,j=1}^n B_{ij}(u) & -\sum_{i=1}^n B_{i1}(u) & \cdots &
  -\sum_{i=1}^n B_{in}(u) \\
  -\sum_{j=1}^n B_{1j}(u) & B_{11}(u) & \cdots & B_{1n}(u) \\
  \vdots & \vdots & & \vdots \\
  -\sum_{j=1}^n B_{nj}(u) & B_{n1}(u) & \cdots & B_{nn}(u)
  \end{pmatrix}.
\end{align}
A formal computation, which is made precise in Lemma \ref{lem.rei}, shows that
\begin{align}
  & \frac{dH}{dt}(u|v) = J_1 + J_2, \quad\mbox{where } \label{2.dHdt} \\
  & J_1 = -\sum_{i,j=0}^n\int_\Omega  
  \bar{B}_{ij}(\bar{u})\na\log\frac{u_i}{v_i}
  \cdot\na\log\frac{u_j}{v_j}dx, \nonumber \\
  & J_2 = -\sum_{i,j=0}^n\int_\Omega \bigg(\frac{\bar{B}_{ij}(\bar{u})}{u_i}
  - \frac{\bar{B}_{ij}(\bar{v})}{v_i}\bigg)u_i\na\log v_j
  \cdot\na\log\frac{u_i}{v_i}dx. \nonumber 
\end{align} 
The equality becomes an inequality if $u$ is a weak solution. The matrix $\bar{B}(\bar{u})$ has a nontrivial kernel, since the sum of the rows or columns vanishes. We show that the matrix $(\bar{B}_{ij}(\bar{u})/\sqrt{u_iu_j})_{ij}$ is positive definite only on a subspace $L(\bar{u})$, and its range turns out to be a subset of $L(\bar u)$. This enables us to use the projection $P_L(\bar u)$ onto $L(\bar u)$ on the variable $Y$ with components $Y_i:=\sqrt{u_i}\na\log(u_i/v_i)$ for $i=0,\ldots,n$. We can therefore use \eqref{1.posdef} to estimate
\begin{align*}
  J_1 \le -c_A\sum_{i=1}^n u_i^{2s-1}|(P_L(\bar{u})Y)_i|^2 dx.
\end{align*}
For the estimate of $J_2$, we split the sum into $i=0$ and $i=1,\ldots,n$ (similarly for $j$), insert the definition of $\bar{B}(\bar{u})$, and use Young's inequality (see Section \ref{sec.wsu} for details):
\begin{align}\label{1.J2}
  J_2 &= -\sum_{i,j=0}^n\int_\Omega
  \bigg(\frac{\bar{B}_{ij}(\bar{u})}{u_i} - \frac{\bar{B}_{ij}
  (\bar{v})}{v_i}\bigg)\sqrt{u_i}\na\log v_j\cdot Y_idx \\
  &\le \frac{c_A}{2}\sum_{i=0}^n\int_\Omega u_i^{2s-1}
  |(P_L(\bar{u})Y)_i|^2 dx \nonumber \\
  &\phantom{xx}+ \frac{1}{2c_A}\sum_{i,j=0}^n\int_\Omega u_i^{2-2s}
  \bigg|\frac{\bar{B}_{ij}(\bar{u})}{u_i}
  - \frac{\bar{B}_{ij}(\bar{v})}{v_i}\bigg|^2|\na\log v_j|^2 dx.
  \nonumber 
\end{align}
The first term on the right-hand side can be absorbed by $J_1$. The second term can be estimated if $s\le 1$ (then $u_i^{2-s2}\le 1$), if $|\na\log v_j|$ is a bounded function (here, we need that $v$ is a positive strong solution in the sense of Definition \ref{def.strong} below), and if $\bar{u}\mapsto \bar{B}(\bar{u})/u_i$ is Lipschitz continuous. Under these conditions, it follows from \eqref{2.dHdt} that
\begin{align*}
  \frac{dH}{dt}(u|v) \le C\sum_{i=0}^n\int_\Omega|u_i-v_i|^2 dx
\end{align*}
for some constant $C>0$ depending on the $L^\infty$ norm of $\na\log v_j$. Applying the elementary inequality \cite[Lemma 16]{HJT22}
\begin{align}\label{1.HL2}
  u_i\log\frac{u_i}{v_i} - u_i + v_i 
  \ge \frac12\frac{|u_i-v_i|^2}{\max\{u_i,v_i\}},
\end{align}
the boundedness of $u_i$ and $v_i$ yields $(dH/dt)(u|v) \le CH(u|v)$. Since $u(0)=v(0)$, Gronwall's lemma gives the desired result $u(t)=v(t)$ for $t>0$. 

The idea to augment the mobility matrix was already used in \cite{GeJu18}. The novelty of this paper is the generalization of this idea and the identification of the conditions allowing us to establish the weak--strong uniqueness property for general cross-diffusion systems.


\subsection{Main result} 

We introduce the spaces
\begin{align}\label{1.simplex}
  \simplex = \bigg\{u\in(0,1)^n:\sum_{i=1}^n u_i<1\bigg\}, \quad
  \simplex_0 = \bigg\{\bar{u}=(u_0,u):u\in\simplex, \,
  u_0=1-\sum_{i=1}^n u_i\bigg\}
\end{align}
and impose the following hypotheses.

\begin{itemize}
\item[(H1)] Domain: $\Omega\subset\R^d$ with $d\ge 1$ is a bounded domain with Lipschitz boundary, $T>0$, and $\Omega_T:=\Omega\times(0,T)$. 
\item[(H2)] Initial data: $u^0=(u_1^0,\ldots,u_n^0)\in L^1(\Omega;\R^n)$ is such that $u^0(x)\in\overline{\simplex}$ for a.e.\ $x\in\Omega$. 
\item[(H3)] Positive definiteness: $A\in C^0(\overline{\simplex};\R^{n\times n})$ and there exist $0<s\le 1$ and $c_A>0$ such that
\begin{align*}
  z^Th''(u)A(u)z\ge c_A\sum_{i=1}^n u_i^{2s-2}z_i^2
  \quad\mbox{for all }z\in\R^n,\,u\in\simplex.
\end{align*}
\item[(H4)] Boundedness: Either
\begin{align*}
  &{\rm (i)}\phantom{i}
  \quad u\mapsto (h''(u)A(u))_{ij}u_i^{1-s}u_j^{1-s}
  \quad\mbox{for }u\in\overline\simplex,\ i,j=1,\ldots,n,
  \mbox{ or}\quad \\
  &{\rm (ii)}\quad \bar{u}\mapsto
  \frac{\bar{B}_{ij}(\bar{u})}{u_i^su_j^s}\quad\mbox{for } 
  \bar{u}\in\overline{\simplex}_0,\ i,j=0,\ldots,n,
\end{align*}
are bounded functions.
\item[(H5)] Lipschitz continuity: There exists $C_B>0$ such that for all $i,j=0,\ldots,n$ and $\bar{u}$, $\bar{v}\in\overline{\mathcal{D}}_0$,
\begin{align*}
  \bigg|\frac{\bar{B}_{ij}(\bar{u})}{u_i}
  - \frac{\bar{B}_{ij}(\bar{v})}{v_i}\bigg|
  \le C_B\sum_{k=0}^n|u_k-v_k|.
\end{align*}
\end{itemize}

Hypotheses (H1)--(H3) are needed to guarantee the existence of a global weak solution; see \cite{Jue15}. The condition $s\le 1$ is required to estimate the last term in \eqref{1.J2} (to ensure that the factor $u_i^{2-2s}$ can be bounded). This assumption is not needed in the existence analysis. Hypothesis (H4) is used to ensure the integrability of the entropy production; see \eqref{1.int} below. This property is needed to prove the entropy inequality for weak solutions, which is an ingredient of the proof of the relative entropy inequality. Finally, Hypothesis (H5) is required to estimate the integral $J_2$; see \eqref{1.J2}. We can replace this assumption by the following weaker condition:
\begin{itemize}
\item[(H5)'] There exist $C_B>0$ and $\gamma\in(0,1)$ such that for all $i,j=0,\ldots,n$ and $\bar{u}$, $\bar{v}\in\overline{\mathcal{D}}_0$,
\begin{align*}
  \bigg|\frac{\bar{B}_{ij}(\bar{u})}{u_i}
  - \frac{\bar{B}_{ij}(\bar{v})}{v_i}\bigg|
  \le C_B\sum_{k=0}^n|u_k^\gamma-v_k^\gamma|.
\end{align*}
\end{itemize}
Indeed, if Hypothesis (H5) holds, we infer from the inequality $|u_k-v_k|\le C(\gamma)|u_k^\gamma-v_k^\gamma|$ for $u_k$, $v_k\in[0,1]$ that Hypothesis (H5)' is fulfilled. A simple example fulfilling Hypothesis (H5)' but not (H5) is presented in Section \ref{sec.scalar}.

We specify the definition of weak and strong solutions. 

\begin{definition}[Weak solution]\label{def.weak}
Let $T>0$. We call $u$ a {\em weak solution} to \eqref{1.eq}--\eqref{1.bic} if 
$u_i(x,t)\in\overline{\simplex}$ for $x\in\Omega_T$,
\begin{align*}
  u_i^s\in L^2(0,T;H^1(\Omega)), \quad
  \pa_t u_i\in L^2(0,T;H^1(\Omega)'),
\end{align*}
and it holds for all $\phi_i\in L^2(0,T;H^1(\Omega))$ and $i=1,\ldots,n$ that
\begin{align*}
  \int_0^T\langle\pa_t u_i,\phi_i\rangle dt 
  + \int_0^T\int_\Omega A_{ij}(u)\na u_j\cdot\na\phi_i dxdt = 0,
\end{align*}
where $\langle\cdot,\cdot\rangle$ denotes the dual product between $H^1(\Omega)'$ and $H^1(\Omega)$. The initial datum is satisfied in the sense of $L^2(\Omega)$. 
\end{definition}

Notice that if $u$ is a weak solution to \eqref{1.eq}--\eqref{1.bic} then $u_i\in L^\infty(\Omega_T)$ and $u_i^s\in L^2(0,T;H^1(\Omega))$  imply that $u_i\in L^2(0,T;H^1(\Omega))$, since $\na u_i=s^{-1}u_i^{1-s}\na u_i^s\in L^2(\Omega_T)$. According to \cite[Theorem 2]{Jue15}, under Hypotheses (H1)--(H3), there exists a global weak solution $u$ to \eqref{1.eq}--\eqref{1.bic} in the sense of Definition \ref{def.weak}. We show in Appendix \ref{sec.app} that under Hypothesis (H4), the solution constructed in \cite{Jue15} satisfies the entropy inequality
\begin{align}\label{1.ei}
  \int_\Omega h(u(t))dx + \sum_{i,j=0}^n \int_0^t\int_\Omega
  \bar{B}_{ij}(\bar{u})\na\log u_i\cdot\na\log u_j dxds 
  \le \int_\Omega h(u^0)dx.
\end{align}
Notice that by Hypothesis (H4);
\begin{align}\label{1.int}
  \bar{B}_{ij}(\bar{u})\na\log u_i\cdot\na\log u_j
  = \frac{1}{s^2}\frac{\bar{B}_{ij}(\bar{u})}{u_i^su_j^s}
  \na u_i^s\cdot\na u_j^s, \quad i,j=0,\ldots,n,
\end{align}
is integrable since $\na u_i^s\in L^2(\Omega_T)$. 

\begin{definition}[Strong solution]\label{def.strong}
Let $T>0$. We say that $v=(v_1,\ldots,v_n)$ is a {\em positive strong solution} to \eqref{1.eq}--\eqref{1.bic} if it is a weak solution, there exists $q>0$ such that 
\begin{align*}
  v_i\ge q>0\quad\mbox{in }\Omega_T, \quad
  v_i\in L^\infty(0,T;W^{1,\infty}(\Omega))
\end{align*}
for all $i=1,\ldots,n$, and $v$ does not have anomalous diffusion, i.e., it fulfills the entropy equality
\begin{align}\label{1.ee}
  \int_\Omega h(v(t))dx + \sum_{i,j=0}^n\int_0^t\int_\Omega
  \bar{B}_{ij}(\bar{v})\na\log v_i\cdot\na\log v_j dxds
  = \int_\Omega h(v(0))dx.
\end{align}
\end{definition}

Our main result is as follows.

\begin{theorem}[Weak--strong uniqueness]\label{thm.wsu}
Let Hypotheses (H1)--(H5) hold, let $u$ be a weak solution to \eqref{1.eq}--\eqref{1.bic} satisfying \eqref{1.ei} and let $v$ be a positive strong solution. If the initial data coincide then $u(t)=v(t)$ in $\Omega$ for $t>0$.
\end{theorem}

Our result can be applied to several cross-diffusion systems arising from physical and biological applications. We summarize the models in the following table (see Section \ref{sec.exam} for details):

\bigskip
\renewcommand{\arraystretch}{1.1}
\begin{tabular}{|l|l|l|}
\hline 
Model & Weak--strong uniqueness & Reference \\ \hline
Multiphase model & new result & \\ 
Tumor-growth model & new result & \\
Modified Busenberg--Travis model & new result & \\
Maxwell-Stefan model & known & \cite{HJT22} \\
Thin-film solar-cell model & known (but different proof) 
& \cite{HoBu22} \\ 
Ion-transport model & known & \cite{JuMa24} \\
\hline
\end{tabular}

\bigskip
Our approach allows for degenerate problems (in the sense that $A_{ii}(u)=0$ if $u_i=0$) and can be extended to more general settings; see Section \ref{sec.ext}. However, it also has some limitations. For instance, we cannot consider the population model \cite[Sec.~4.2]{Jue16}
\begin{align*}
  \pa_t u_i = \diver(d_i(u_0\na u_i-u_i\na u_0)), \quad i=1,\ldots,n,
\end{align*}
where $d_i>0$, because of the non-standard degeneracy involving $u_0$. Indeed, for this example, Hypothesis (H3) changes to $z^Th''(u)A(u)z\ge c_A\sum_{i=1}^n u_0u_i^{-1}z_i^2$ \cite[(23)]{GeJu18}. The factor $u_0$ needs to be incorporated in the term $J_1$ (see \eqref{2.dHdt}), but then we need to estimate
\begin{align*}
  \frac{1}{u_0}\bigg|\frac{\bar{B}_{ij}(\bar{u})}{u_i}
  - \frac{\bar{B}_{ij}(\bar{v})}{v_i}\bigg|^2,
\end{align*}
arising from Young's inequality applied to the term $J_2$ (see \eqref{1.J2}). Unfortunately, the factor $1/u_0$ does not cancel out and we cannot expect a positive lower bound for $u_0$. Thus, non-standard degeneracies in $u_0$ are not permitted. In some sense, only cross-diffusion systems can be considered whose main part contains $u_i^\alpha\na u_i$ with $\alpha\ge 0$. 

The presented technique can be applied in principle to cross-diffusion systems without volume-filling property. Since solutions to such systems are generally not bounded, inequality \eqref{1.HL2} cannot be used anymore. This issue was overcome in \cite{Fis17} by truncating the part $-u_i\log v_i$ in the relative entropy. This truncation was successfully applied to a population cross-diffusion model in \cite{ChJu19}. However, the truncation procedure is highly technical, while a key advantage of our approach is that it avoids any truncation, leading to a surprisingly straightforward proof, which is possibly accessible to generalizations.

The paper is organized as follows. We present some results from matrix analysis in Section \ref{sec.pre}. Theorem \ref{thm.wsu} is proved in Section \ref{sec.wsu} using the ideas explained before. The result is applied to several models in Section \ref{sec.exam}. In Section \ref{sec.ext}, we present some extensions of the approach, including an example for which Hypotheses (H4) and (H5) are not satisfied. Finally, the entropy inequality for weak solutions is proved in Appendix \ref{sec.app}.


\section{Preliminaries}\label{sec.pre}

Recall definition \eqref{1.simplex} for $\simplex$ and $\simplex_0$ and let $\bar{u}\in\simplex_0$. We introduce the matrix $G(\bar{u})\in\R^{(n+1)\times(n+1)}$ with entries
$G_{ij}(\bar{u}) = \bar{B}(\bar{u})/\sqrt{u_iu_j}$ for $i,j=0,\ldots,n$, where $\bar{B}(\bar{u})$ is defined in \eqref{1.barB}. Then, by the construction of $\bar{B}_{ij}$,
\begin{align}\label{2.kerG}
  \sum_{j=0}^n G_{ij}(\bar{u})\sqrt{u_j}
  = \frac{1}{\sqrt{u_i}}\sum_{j=0}^n\bar{B}_{ij}(\bar{u}) = 0, \quad
  \sum_{i=0}^n G_{ji}(\bar{u})\sqrt{u_i}
  = \frac{1}{\sqrt{u_j}}\sum_{i=0}^n\bar{B}_{ji}(\bar{u}) = 0,
\end{align}
which implies that $\operatorname{span}\sqrt{\bar{u}}\subset \operatorname{ker}G(\bar{u})$ and $\operatorname{span}\sqrt{\bar u}\subset \operatorname{ker}G(\bar{u})^T$, where $\sqrt{\bar u}=(\sqrt{u_0},\ldots,$ $\sqrt{u_n})$. We set
\begin{align*}
  L(\bar{u}) := \{Y\in\R^{n+1}: \sqrt{\bar u}\cdot Y=0\}, \quad
  L(\bar{u})^\perp = \operatorname{span}\sqrt{\bar u},
\end{align*}
and introduce the projections
\begin{align*}
  (P_L(\bar{u})Y)_i = Y_i - \sqrt{u_i}(\sqrt{\bar u}\cdot Y), \quad
  (P_{L^\perp}(\bar{u})Y)_i = \sqrt{u_i}(\sqrt{\bar u}\cdot Y), \quad
  i=0,\ldots,n.
\end{align*}
Then \eqref{2.kerG} implies that 
\begin{align}\label{2.GPL}
  P_L(\bar{u})G(\bar{u}) = G(\bar{u})P_L(\bar{u}) = G(\bar u).
\end{align}
We show that the matrix $G(\bar{u})$ is positive definite on the subspace $L(\bar{u})$.

\begin{lemma}\label{lem.G}
Let $z\in L(\bar{u})$. Then
\begin{align*}
  z^TG(\bar{u})z \ge c_A\sum_{i=1}^n u_i^{2s-1}z_i^2.
\end{align*}
\end{lemma}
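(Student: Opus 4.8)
The plan is to reduce the $(n+1)$-dimensional quadratic form $z^{T}G(\bar u)z$ to an $n$-dimensional one built from the original mobility matrix $B(u)$, and then to invoke Hypothesis (H3). I first introduce the scaled vector $\zeta\in\R^{n+1}$ with $\zeta_i:=z_i/\sqrt{u_i}$, so that
\[z^{T}G(\bar u)z=\sum_{i,j=0}^{n}\frac{\bar B_{ij}(\bar u)}{\sqrt{u_iu_j}}\,z_iz_j=\sum_{i,j=0}^{n}\bar B_{ij}(\bar u)\,\zeta_i\zeta_j.\]
Splitting off the index $0$ and inserting the block form \eqref{1.barB} of $\bar B$ --- the corner $\bar B_{00}=\sum_{i,j=1}^{n}B_{ij}$, the border entries $\bar B_{0j}=-\sum_{i=1}^{n}B_{ij}$ and $\bar B_{i0}=-\sum_{j=1}^{n}B_{ij}$, and the block $\bar B_{ij}=B_{ij}$ --- the four groups of terms recombine into the single expression
\[z^{T}G(\bar u)z=\sum_{i,j=1}^{n}B_{ij}(u)\,(\zeta_i-\zeta_0)(\zeta_j-\zeta_0)=\eta^{T}B(u)\eta,\qquad \eta_i:=\zeta_i-\zeta_0\ (i=1,\dots,n).\]
This algebraic collapse, which is precisely why $\bar B$ was constructed with vanishing row and column sums, is the first key step.

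Next I bring in the entropy structure $B(u)=A(u)h''(u)^{-1}$ together with the explicit Hessian $h''(u)_{ij}=\delta_{ij}/u_i+1/u_0$, obtained by differentiating $\pa h/\pa u_i=\log(u_i/u_0)$. The decisive step, where the restriction $z\in L(\bar u)$ is used, is to identify $h''(u)^{-1}\eta$: I claim it equals the vector $p$ with components $p_i:=\sqrt{u_i}\,z_i$. To check $h''(u)p=\eta$ I compute
\[\big(h''(u)p\big)_i=\frac{z_i}{\sqrt{u_i}}+\frac{1}{u_0}\sum_{j=1}^{n}\sqrt{u_j}\,z_j,\]
and here the membership $z\in L(\bar u)$, that is $\sum_{j=0}^{n}\sqrt{u_j}z_j=0$, yields $\sum_{j=1}^{n}\sqrt{u_j}z_j=-\sqrt{u_0}\,z_0$, so that $(h''(u)p)_i=z_i/\sqrt{u_i}-z_0/\sqrt{u_0}=\eta_i$. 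Hence $p=h''(u)^{-1}\eta$.

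Finally, using the symmetry of $h''(u)$ I rewrite $\eta^{T}B(u)\eta=\eta^{T}A(u)p=(h''(u)p)^{T}A(u)p=p^{T}h''(u)A(u)p$ and apply Hypothesis (H3) to the vector $p$, which gives $p^{T}h''(u)A(u)p\ge c_A\sum_{i=1}^{n}u_i^{2s-2}p_i^2$. Since $p_i^2=u_iz_i^2$, the weight simplifies to $u_i^{2s-2}\cdot u_iz_i^2=u_i^{2s-1}z_i^2$, which is exactly the asserted lower bound. I expect the only real obstacle to be bookkeeping: expanding $\bar B$ with the correct signs in the first step, and invoking the constraint $z\in L(\bar u)$ at exactly the right moment so that $h''(u)^{-1}\eta$ collapses to $\sqrt{u_i}z_i$. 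No analytic estimates are required --- the statement is pure linear algebra anchored on the single inequality (H3).
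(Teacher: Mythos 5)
Your proof is correct and follows essentially the same route as the paper: collapse the $(n+1)$-dimensional form to $\eta^T B(u)\eta$ using the vanishing row/column sums of $\bar B$, use the constraint $z\in L(\bar u)$ to identify $h''(u)^{-1}\eta$ as $(\sqrt{u_i}\,z_i)_i$, and conclude via Hypothesis (H3). The only cosmetic differences are a sign convention ($\eta_i = z_i/\sqrt{u_i}-z_0/\sqrt{u_0}$ versus the paper's $\xi_i = z_0/\sqrt{u_0}-z_i/\sqrt{u_i}$) and that you verify the key identity forward with the explicit Hessian $h''(u)_{ij}=\delta_{ij}/u_i+1/u_0$, while the paper computes backward with the explicit inverse $\big(h''(u)^{-1}\big)_{ij}=\delta_{ij}u_i-u_iu_j$; both are the same argument.
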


\begin{proof}
We decompose the sum over $i,j=0,\ldots,n$ into the four parts ($i,j=1,\ldots,n$), ($i=0$, $j=1,\ldots,n$), ($i=1,\ldots,n$, $j=0$), and ($i=j=0$) and use definition \eqref{1.barB} of $\bar{B}(\bar{u})$:
\begin{align*}
  z^TG(\bar{u})z &= \sum_{i,j=1}^n\frac{z_iz_j}{\sqrt{u_iu_j}}
  B_{ij}(u) - \sum_{i,j=1}^n\frac{z_iz_0}{\sqrt{u_iu_0}}B_{ij}(u)
  - \sum_{i,j=1}^n\frac{z_0z_j}{\sqrt{u_0u_j}}B_{ij}(u)
  + \frac{z_0^2}{u_0}\sum_{i,j=1}^n B_{ij}(u) \\
  &= \sum_{i,j=1}^n\bigg(\frac{z_0}{\sqrt{u_0}}-\frac{z_i}{\sqrt{u_i}}
  \bigg)B_{ij}(u)\bigg(\frac{z_0}{\sqrt{u_0}}-\frac{z_j}{\sqrt{u_j}}
  \bigg).
\end{align*}
Set $\xi_i=z_0/\sqrt{u_0}-z_i/\sqrt{u_i}$ for $i=1,\ldots,n$. Since the entries of $h''(u)^{-1}$ are given by $\delta_{ij}u_i-u_iu_j$, we find that
\begin{align*}
  (h(u)^{-1}\xi)_i &= u_i\xi_i - u_i\sum_{j=1}^n u_j\xi_j 
  = \frac{u_iz_0}{\sqrt{u_0}}-\sqrt{u_i}z_i
  - u_i\sum_{j=1}^n u_j\frac{z_0}{\sqrt{u_0}} 
  + \sum_{j=1}^n\sqrt{u_j}z_j \\
  &= \frac{u_iz_0}{\sqrt{u_0}}-\sqrt{u_i}z_i
  - u_i(1-u_0)\frac{z_0}{\sqrt{u_0}} - \sqrt{u_0}z_0
  = -\sqrt{u_i}z_i =: \eta_i,
\end{align*}
where we used the properties $\sum_{j=1}^n u_j=1-u_0$ and $z\in L(\bar{u})$, i.e.\ $\sum_{j=1}^n\sqrt{u_j}z_j = -\sqrt{u_0}z_0$. This yields $\xi=h''(u)\eta$ and, by Hypothesis (H3),
\begin{align*}
  z^TG(\bar{u})z &= \xi^T B(u)\xi 
  = \eta^T h''(u)^T(A(u)h''(u)^{-1})h''(u)\eta \\
  &= \eta^T h''(u)A(u)\eta \ge c_A\sum_{i=1}^n u_i^{2s-2}\eta_i^2
  = c_A\sum_{i=1}^n u_i^{2s-1}z_i^2,
\end{align*}
which finishes the proof.
\end{proof}


\section{Proof of Theorem \ref{thm.wsu}}\label{sec.wsu}

We first derive the relative entropy inequality.

\begin{lemma}[Relative entropy inequality]\label{lem.rei}
Let $u$ be a weak solution satisfying \eqref{1.ei} and let $v$ be a positive strong solution to \eqref{1.eq}--\eqref{1.bic}. Then for any $0<t<T$,
\begin{align}\label{3.rei}
  H(u(t)|v(t)) &- H(u(0)|v(0)) 
  \le -\sum_{i=0}^n\int_0^t\int_\Omega\bar{B}_{ij}(\bar{u})
  \na\log\frac{u_j}{v_j}\cdot\na\log\frac{u_i}{v_i}dxds, \\
  &-\sum_{i=0}^n\int_0^t\int_\Omega\bigg(
  \frac{\bar{B}_{ij}(\bar{u})}{u_i}-\frac{\bar{B}_{ij}(\bar{v})}{v_i}
  \bigg)u_i\na\log v_j\cdot\na\log\frac{u_i}{v_i}dxds. \nonumber 
\end{align}
\end{lemma}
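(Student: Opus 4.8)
The plan is to avoid differentiating $\int_\Omega h(u)\,dx$ directly — which is impossible for a mere weak solution and would require the inadmissible test function $\log u_i$ — by splitting the relative entropy into three pieces,
\begin{align*}
  H(u|v) = \int_\Omega h(u)\,dx - \int_\Omega h(v)\,dx
  - \sum_{i=0}^n\int_\Omega(u_i-v_i)\log v_i\,dx =: \mathrm{(I)}-\mathrm{(II)}+K,
\end{align*}
where the only logarithms appearing in $K$ are $\log v_i$, a genuine $L^\infty(0,T;W^{1,\infty}(\Omega))$ function because $v_i\ge q>0$. First I would bound the increment of (I) from above by $-\sum_{i,j}\int_0^t\int_\Omega\bar B_{ij}(\bar u)\na\log u_i\cdot\na\log u_j$ using the entropy inequality \eqref{1.ei}, and rewrite the increment of (II) as the corresponding entropy production of $v$ by means of the entropy equality \eqref{1.ee}. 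This is the only place where the inequality in \eqref{3.rei} enters; everything that follows is an identity.

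The core computation is the time differentiation of $K$. Since $u_i-v_i\in L^2(0,T;H^1(\Omega))$ with $\pa_t(u_i-v_i)\in L^2(0,T;H^1(\Omega)')$ and, thanks to $v_i\ge q>0$ and $v_i\in W^{1,\infty}$, also $\log v_i\in L^2(0,T;H^1(\Omega))$ with $\pa_t\log v_i=\pa_t v_i/v_i\in L^2(0,T;H^1(\Omega)')$, the integration-by-parts formula in time applies and gives
\begin{align*}
  \frac{dK}{ds} = -\sum_{i=0}^n\langle\pa_s u_i-\pa_s v_i,\log v_i\rangle
  - \sum_{i=0}^n\Big\langle\pa_s v_i,\frac{u_i-v_i}{v_i}\Big\rangle.
\end{align*}
Into these terms I would insert the augmented weak formulations of $u$ and $v$, tested against the admissible functions $\log v_i\in H^1(\Omega)$ and $(u_i-v_i)/v_i=u_i/v_i-1\in H^1(\Omega)$; the latter lies in $L^2(0,T;H^1(\Omega))$ because $u_i\in L^2(0,T;H^1(\Omega))$. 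After integrating by parts in space and using $\na((u_i-v_i)/v_i)=(u_i/v_i)\na\log(u_i/v_i)$, this yields three contributions: one carrying $\bar B_{ij}(\bar u)\na\log u_j\cdot\na\log v_i$, one equal to minus the entropy production of $v$, and one carrying $(\bar B_{ij}(\bar v)/v_i)u_i\na\log v_j\cdot\na\log(u_i/v_i)$.

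It then remains to assemble the pieces over $(0,t)$. The $v$-production term generated by $K$ cancels exactly the one coming from (II), so only the $u$-production and the two mixed terms survive. Writing $\na\log u_i=\na\log(u_i/v_i)+\na\log v_i$ in the $u$-production term cancels the first contribution from $K$; a second expansion $\na\log u_j=\na\log(u_j/v_j)+\na\log v_j$ then isolates $J_1$ together with $-\sum_{i,j}\int_0^t\int_\Omega(\bar B_{ij}(\bar u)/u_i)u_i\na\log v_j\cdot\na\log(u_i/v_i)$, and combining this with the remaining $\bar B(\bar v)$ term from $K$ produces exactly $J_2$. Throughout I would keep each factor $\na\log u_k$ multiplied by $\bar B_{ij}$, so that every integrand has the form $s^{-2}(\bar B_{ij}(\bar u)/(u_i^su_j^s))\na u_i^s\cdot\na u_j^s$ or $s^{-1}(\bar B_{ij}(\bar u)/u_j^s)\na u_j^s\cdot\na\log v_i$ and is integrable by Hypothesis (H4) together with $\na u_k^s\in L^2(\Omega_T)$, exactly as in \eqref{1.int}.

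The main obstacle is the low regularity of the weak solution: $\log u_i$ is not an admissible test function and $\na\log u_i$ need not be square integrable near $\{u_i=0\}$. The decomposition above is engineered precisely so that the singular information about $u$ is used only through the entropy inequality \eqref{1.ei}, while every explicit integration by parts involves the regular test functions built from $v$. The remaining technical points are the justification of the time integration-by-parts for $K$ — which is exactly what forces the qualitative assumptions $v_i\ge q>0$ and $v_i\in L^\infty(0,T;W^{1,\infty}(\Omega))$ in Definition \ref{def.strong} — and the validity of the augmented weak formulation for the index $i=0$, which follows from $u_0=1-\sum_{i=1}^n u_i$ and the vanishing column sums of $\bar B$.
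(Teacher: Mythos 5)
Your proposal is correct and follows essentially the same route as the paper's proof: the same splitting of $H(u|v)$ into $\int_\Omega h(u)\,dx$, $\int_\Omega h(v)\,dx$, and the cross term $-\sum_i\int_\Omega(u_i-v_i)\log v_i\,dx$, with the entropy inequality \eqref{1.ei} and equality \eqref{1.ee} handling the first two, the time integration by parts and the test functions $\log v_i$ and $u_i/v_i-1$ handling the third, and the same add-and-subtract of $\bar{B}_{ij}(\bar u)\na\log v_j\cdot\na\log(u_i/v_i)$ to produce the two terms of \eqref{3.rei}. Your explicit attention to the integrability via (H4), written as in \eqref{1.int}, and to the validity of the augmented $i=0$ equation matches the paper's treatment.
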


\begin{proof}
The difference of the relative entropies becomes
\begin{align}\label{3.aux}
  H(u(t)&|v(t)) - H(u(0)|v(0))
  = \int_0^t\frac{d}{dt}\int_\Omega
  \bigg(h(u)-h(v)-\sum_{i=0}^n\log v_i(u_i-v_i)\bigg)dxds \\
  &= \int_\Omega\big(h(u(t)) - h(u(0))\big)dx
  - \int_\Omega\big(h(v(t)) - h(v(0))\big)dx \nonumber \\
  &\phantom{xx}- \sum_{i=0}^n\int_0^t\frac{d}{dt}\int_\Omega
  (u_i-v_i)\log v_i dxds, \nonumber 
\end{align}
recalling definition \eqref{1.h} of $h$. Using the chain rule \cite[Sec.~2.1.1]{GaMa96}, the last term can be reformulated as
\begin{align*}
  -\sum_{i=0}^n&\int_0^t\frac{d}{dt}\int_\Omega
  (u_i-v_i)\log v_i dxds \\
  &= -\sum_{i=0}^n\int_0^t\langle\pa_t\log v_i,u_i-v_i\rangle dxds
  - \sum_{i=0}^n\int_0^t\langle \pa_t(u_i-v_i),\log v_i\rangle dxds \\
  &= -\sum_{i=0}^n\int_0^t\bigg\langle \pa_t v_i,\frac{u_i}{v_i}-1
  \bigg\rangle ds 
  - \sum_{i=0}^n\int_0^t\langle \pa_t(u_i-v_i),\log v_i\rangle dxds \\
  &= \sum_{i,j=0}^n\int_0^t\int_\Omega\bar{B}_{ij}(\bar{v})
  \na\log v_j\cdot\na\frac{u_i}{v_i}dxds \\
  &\phantom{xx}+ \sum_{i,j=0}^n\int_0^t\int_\Omega
  \big(\bar{B}_{ij}\big(\bar{u})\na\log u_j
  - \bar{B}_{ij}(\bar{v})\na\log v_j\big)\cdot\na\log v_i dxds.
\end{align*}
The term involving $\log u_j$ is interpreted as
\begin{align*}
  \bar{B}_{ij}(\bar{u})\na\log u_j
  = \frac{u_i^s}{s}\frac{\bar{B}_{ij}(\bar{u})}{u_i^su_j^s}\na u_j^s,
\end{align*}
which by Hypothesis (H4) is a function in $L^2(\Omega_T)$. We insert the entropy inequality \eqref{1.ei} for $u_i$ and the entropy equality \eqref{1.ee} for $v_i$ in the first two terms of \eqref{3.aux}. Then some terms cancel, and we obtain 
\begin{align*}
  H(u(t)|v(t)) - H(u(0)|v(0))
  &\le -\sum_{i,j=0}^n\int_0^t\int_\Omega\bar{B}_{ij}(\bar{u})
  \na\log u_j\cdot\na\log\frac{u_i}{v_i} dxds \\
  &\phantom{xx}+ \sum_{i,j=0}^n\int_0^t\int_\Omega\bar{B}_{ij}(\bar{v})
  \na\log v_j\cdot\na\frac{u_i}{v_i}dxds.
\end{align*}
We add and subtract the sum of $\bar{B}_{ij}(\bar{u})\na\log v_j\cdot\na\log(u_i/v_i)$, which shows \eqref{3.rei}.
\end{proof}

We proceed with the proof of Theorem \ref{thm.wsu}. For this, we estimate the two integrals on the right-hand side of \eqref{3.rei}:
\begin{equation}\label{3.I12}
\begin{aligned}
  I_1 &= -\sum_{i,j=0}^n\int_0^t\int_\Omega\bar{B}_{ij}(\bar{u})
  \na\log\frac{u_j}{v_j}\cdot\na\log\frac{u_i}{v_i}dxds, \\
  I_2 &= -\sum_{i,j=0}^n\int_0^t\int_\Omega\bigg(
  \frac{\bar{B}_{ij}(\bar{u})}{u_i}-\frac{\bar{B}_{ij}(\bar{v})}{v_i}
  \bigg)u_i\na\log v_j\cdot\na\log\frac{u_i}{v_i}dxds.
\end{aligned}
\end{equation}
We start with the estimation of $I_1$. Set
\begin{align*}
  Y_i := \sqrt{u_i}\na\log\frac{u_i}{v_i}, \quad i=0,\ldots,n.
\end{align*}
This function may be not integrable if $s<1/2$, but the functions $u_i^{2s-1}|\sqrt{u_i}\log u_i|^2 = s^{-2}|\na u_i^s|^2$ and hence $u_i^{2s-1}|Y_i|^2$, as well as $G_{ij}(\bar{u})Y_iY_j = \bar{B}_{ij}(\bar{u})\na\log u_j\cdot\na\log u_i$ are integrable. It follows from \eqref{2.GPL} and Lemma \ref{lem.G} that 
\begin{align}\label{3.I1}
  I_1 &= -\sum_{i,j=0}^n\int_0^t\int_\Omega G_{ij}(\bar{u})Y_iY_j dxds
  = -\sum_{i,j=0}^n\int_0^t\int_\Omega
  G_{ij}(\bar{u})(P_L(\bar{u})Y)_i(P_L(\bar{u})Y)_j dxds \\
  &\le -c_A\sum_{i=1}^n\int_0^t\int_\Omega u_i^{2s-1}
  |(P_L(\bar{u})Y)_i|^2 dxds. \nonumber 
\end{align}
This term will be used to absorb part of $I_2$. We split $Y_i=(P_L(\bar{u})Y)_i +(P_{L^\perp}(\bar{u})Y)_i$ to find that
\begin{align*}
  & I_2 = I_{21} + I_{22}, \quad\mbox{where} \\
  & I_{21} = -\sum_{i,j=0}^n\int_0^t\int_\Omega\bigg(
  \frac{\bar{B}_{ij}(\bar{u})}{u_i}-\frac{\bar{B}_{ij}(\bar{v})}{v_i}
  \bigg)\sqrt{u_i}\na\log v_j\cdot (P_L(\bar{u})Y)_i dxds, \\
  & I_{22} = -\sum_{i,j=0}^n\int_0^t\int_\Omega\bigg(
  \frac{\bar{B}_{ij}(\bar{u})}{u_i}-\frac{\bar{B}_{ij}(\bar{v})}{v_i}
  \bigg)\sqrt{u_i}\na\log v_j\cdot (P_{L^\perp}(\bar{u})Y)_i dxds.
\end{align*}
For the estimate of $I_{21}$, we will use Hypothesis (H5). The idea is to obtain one part involving the differences $|u_i-v_i|^2$ and another part that can be absorbed by $I_1$. To this end, we split the sum into the four parts ($i,j=1,\ldots,n$), ($i=1,\ldots,n$, $j=0$), ($i=0$, $j=1,\ldots,n$), and ($i=j=0$):
\begin{align*}
  & I_{21} = I_{211} + I_{212} + I_{213} + I_{214}, \quad\mbox{where} \\
  & I_{211} = -\sum_{i,j=1}^n\int_0^t\int_\Omega\bigg(
  \frac{\bar{B}_{ij}(\bar{u})}{u_i}-\frac{\bar{B}_{ij}(\bar{v})}{v_i}
  \bigg)\sqrt{u_i}\na\log v_j\cdot (P_L(\bar{u})Y)_i dxds, \\
  & I_{212} = -\sum_{i=1}^n\int_0^t\int_\Omega\bigg(
  \frac{\bar{B}_{i0}(\bar{u})}{u_i}-\frac{\bar{B}_{i0}(\bar{v})}{v_i}
  \bigg)\sqrt{u_i}\na\log v_0\cdot (P_L(\bar{u})Y)_i dxds, \\
  & I_{213} = -\sum_{j=1}^n\int_0^t\int_\Omega\bigg(
  \frac{\bar{B}_{0j}(\bar{u})}{u_0}-\frac{\bar{B}_{0j}(\bar{v})}{v_0}
  \bigg)\sqrt{u_0}\na\log v_j\cdot (P_L(\bar{u})Y)_0 dxds, \\
  & I_{214} = -\int_0^t\int_\Omega\bigg(
  \frac{\bar{B}_{00}(\bar{u})}{u_0}-\frac{\bar{B}_{00}(\bar{v})}{v_0}
  \bigg)\sqrt{u_0}\na\log v_0\cdot (P_L(\bar{u})Y)_0 dxds.
\end{align*}

We estimate $I_{211},\ldots,I_{214}$ term by term. By Young's inequality for any $\eps>0$,
\begin{align*}
  I_{211} &\le \frac{\eps}{4}\sum_{i=1}^n\int_0^t\int_\Omega u_i^{2s-1}
  |(P_L(\bar{u})Y)_i|^2 dxds \\
  &\phantom{xx}+ \frac{1}{\eps}\sum_{i,j=1}^n\int_0^t\int_\Omega
  \bigg|\frac{\bar{B}_{ij}(\bar{u})}{u_i}
  - \frac{\bar{B}_{ij}(\bar{v})}{v_i}\bigg|^2
  |\na\log v_j|^2 u_i^{2-2s}dxds \\
  &\le \frac{\eps}{4}\sum_{i=1}^n\int_0^t\int_\Omega u_i^{2s-1}
  |(P_L(\bar{u})Y)_i|^2 dxds 
  + C(\eps)\sum_{i=0}^n\int_0^t\int_\Omega|u_i-v_i|^2 dxds,
\end{align*}
where the constant $C(\eps)>0$ depends on the $L^\infty(\Omega_T)$ norm of $\na\log v_j$ and we have used in the last step Hypothesis (H5) as well as the property $u_i^{2-2s}\le 1$ (which holds because of $u_i\le 1$ and $s\le 1$). By similar arguments as before,
\begin{align*}
  I_{212} &\le \frac{\eps}{4}\sum_{i=1}^n\int_0^t\int_\Omega u_i^{2s-1}
  |(P_L(\bar{u})Y)_i|^2 dxds 
  + C(\eps)\sum_{i=0}^n\int_0^t\int_\Omega|u_i-v_i|^2 dxds,
\end{align*}
where $C(\eps)>0$ depends on the $L^\infty(\Omega_T)$ norm of $\na\log v_0$. It follows from $P_L(\bar{u})Y\in L(\bar{u})$ that $\sqrt{u_0}(P_L(\bar{u})Y)_0 = -\sum_{k=1}^n\sqrt{u_k}(P_L(\bar{u})Y)_k$. This leads to
\begin{align*}
  I_{213} &= \sum_{j,k=1}^n\int_0^t\int_\Omega
  \bigg(\frac{\bar{B}_{0j}(\bar{u})}{u_0}
  - \frac{\bar{B}_{0j}(\bar{v})}{v_0}\bigg)
  \na\log v_j\cdot\big(\sqrt{u_k}(P_L(\bar{u})Y)_k\big)dxds \\
  &\le \frac{\eps}{4}\sum_{k=1}^n\int_0^t\int_\Omega u_k^{2s-1}
  |(P_L(\bar{u})Y)_k|^2 dxds 
  + C(\eps)\sum_{i=0}^n\int_0^t\int_\Omega|u_i-v_i|^2 dxds.
\end{align*}
Similarly, the last term $I_{214}$ is estimated according to
\begin{align*}
  I_{214} &\le \frac{\eps}{4}\sum_{k=1}^n\int_0^t\int_\Omega u_k^{2s-1}
  |(P_L(\bar{u})Y)_k|^2 dxds 
  + C(\eps)\sum_{i=0}^n\int_0^t\int_\Omega|u_i-v_i|^2 dxds.
\end{align*}
Summarizing, we find that
\begin{align}\label{3.I21}
  I_{21} \le \eps\sum_{i=1}^n\int_0^t\int_\Omega u_i^{2s-1}
  |(P_L(\bar{u})Y)_i|^2 dxds 
  + C(\eps)\sum_{i=0}^n\int_0^t\int_\Omega|u_i-v_i|^2 dxds.
\end{align}

To estimate $I_{22}$, we use the definition of $P_{L^\perp}(\bar{u})$ and the property $\sum_{k=0}^n\na v_k=0$:
\begin{align*}
  (P_{L^\perp}Y)_i = -\sqrt{u_i}\sum_{k=1}^n u_k\na\log v_k
  = -\sqrt{u_i}\sum_{k=1}^n(u_k-v_k)\na\log v_k.
\end{align*}
Then the Lipschitz continuity of Hypothesis (H5) gives
\begin{align}\label{3.I22}
  I_{22} &= \sum_{i,j=0}^n\sum_{k=1}^n\int_0^t\int_\Omega
  \bigg(\frac{\bar{B}_{ij}(\bar{u})}{u_0}
  - \frac{\bar{B}_{ij}(\bar{v})}{v_0}\bigg)
  u_i(u_k-v_k)\na\log v_j\cdot\na\log v_k dxds \\
  &\le C\sum_{i=0}^n\int_0^t\int_\Omega|u_i-v_i|^2 dxds. \nonumber 
\end{align}

To conclude the proof, we choose $0<\eps\le c_A$. We deduce from estimates \eqref{3.I1} for $I_1$ and \eqref{3.I21}--\eqref{3.I22} for $I_2=I_{21}+I_{22}$ that
\begin{align*}
   H(u(t)|v(t)) - H(u(0)|v(0))  = I_1 + I_2 
  \le C(\eps) \sum_{i=0}^n\int_0^t\int_\Omega|u_i-v_i|^2 dxds.
\end{align*}
It follows from \cite[Lemma 16]{HJT22} for $0\le y\le 1$, $0<z\le 1$ that
\begin{align*}
  y\log\frac{y}{z}-y+z \ge \frac12\frac{(y-z)^2}{\max\{y,z\}}
  \ge \frac12(y-z)^2
\end{align*}
and consequently,
\begin{align*}
  H(u|v) \ge \frac12\sum_{i=0}^n\int_\Omega|u_i-v_i|^2dx.
\end{align*}
This gives for a.e.\ $t>0$:
\begin{align*}
  H(u(t)|v(t)) - H(u(0)|v(0)) \le 2C(\eps)\int_0^t H(u|v)ds.
\end{align*}
Since $u$ and $v$ have the same initial data, Gronwall's lemma shows that $H(u(t)|v(t))=0$ and hence $u(t)=v(t)$ in $\Omega$ for $t>0$. The proof is finished.


\section{Examples}\label{sec.exam}

We present some examples for which Hypotheses (H3)--(H5) are satisfied. In all examples, we consider the equations in a bounded domain $\Omega\subset\R^d$, impose no-flux boundary conditions, and assume that the problem-independent Hypotheses (H1)--(H2) hold. 

\subsection{Scalar diffusion model}\label{sec.scalar}

For illustration, we start with the case $n=1$. We consider the scalar diffusion equation
\begin{align*}
  \pa_t u_1 = \diver(u_1^\alpha(1-u_1)\na u_1)
\end{align*}
with initial and no-flux boundary conditions and $0\le\alpha\le 1$. Standard arguments show that this problem admits a global weak solution satisfying $0\le u_1\le 1$ in $\Omega_T$. The uniqueness of weak solutions can be shown by means of the $H^{-1}(\Omega)$ method, but we use this example to illustrate our technique. We set $u_0=1-u_1$ and introduce the entropy density
\begin{align*}
  h(u_1) = u_1(\log u_1-1) + (1-u_1)(\log(1-u_1)-1) 
  = u_1\log\frac{u_1}{u_0} + \log u_0.
\end{align*}
The $1\times 1$ diffusion matrix equals $A(u_1)=u_0 u_1^\alpha$, and the Hessian becomes $h''(u_1)=1/(u_0u_1)$. Then $h''(u_1)A(u_1)=u_1^{\alpha-1}$, and Hypotheses (H3) and (H4) are satisfied with $s=(\alpha+1)/2$. Here, we need $\alpha\le 1$ to ensure that $s\le 1$. The $1\times 1$ mobility matrix becomes $B(u_1)=A(u_1)h''(u_1)^{-1}=u_0^2u_1^{\alpha+1}$, which yields the augmented matrix
\begin{align*}
  \bar{B}(\bar{u}) = u_0^2u_1^{\alpha+1}\begin{pmatrix}
  \phantom{x}1 & -1 \\ -1 & \phantom{x}1 \end{pmatrix}.
\end{align*}
We compute for $j=0,1$:
\begin{align*}
  \bigg|\frac{\bar{B}_{0j}(\bar{u})}{u_0}
  - \frac{\bar{B}_{0j}(\bar{v})}{v_0}\bigg|
  &= |u_0 u_1^{\alpha+1} - v_0 v_1^{\alpha+1}|
  \le C(|u_0-v_0| + |u_1-v_1|), \\
  \bigg|\frac{\bar{B}_{1j}(\bar{u})}{u_1}
  - \frac{\bar{B}_{1j}(\bar{v})}{v_1}\bigg|
  &= |u_0^2u_1^{\alpha} - v_0^2v_1^{\alpha}|\le C(|u_0-v_0| + |u_1^{\alpha}-v_1^{\alpha}|).
\end{align*}
This shows Hypothesis (H5)' with $\gamma=\alpha$ (and $\gamma=1$ if $\alpha=0$). Thus, Theorem \ref{thm.wsu} can be applied.


\subsection{Multiphase model for biological tissues}

Multiphase models describe interactions between different components of a cellular fluid, for instance, various cell types, extracellular matrix, interstitial fluid, etc. The evolution of the volume fractions $u_i$ of phase $i$ is governed by
\begin{align*}
  \pa_t u_i = \diver\bigg(\na(u_iq_i(u)) 
  - u_i\sum_{j=1}^n\na(u_jq_j(u))\bigg), \quad i=1,\ldots,n.
\end{align*}
This system was formally derived from fluid equations in the zero-inertia limit in \cite[Prop.~2]{Jue16}. The functions $q_i(u)$ model the intraphase pressures. If $n=1$ and $q_1(u_1)=\frac12 u_1$, we recover the diffusion equation from Section \ref{sec.scalar} with $\alpha=1$. We consider only the case $n=2$ and 
\begin{align*}
  q_1(u) = q_{11}u_1 + q_{12}u_1u_2, \quad 
  q_2(u) = q_{12}u_1u_2 + q_{22}u_2,
\end{align*}
where $q_{ij}$ are positive numbers satisfying $16q_{11}q_{22}>q_{12}^2$. Then the diffusion matrix becomes%
\begin{align*}
  A(u) = \begin{pmatrix}
  u_1(2q_{11} + q_{12}u_2(2-u_2) - 2q_1(u)) & 
  u_1(q_{12}u_1(1-u_1) - 2q_{2}(u)) \\
  u_2(q_{12}u_2(1-u_2) - 2q_{1}(u)) & 
  u_2(2q_{22} + q_{12}u_1(2-u_1) - 2q_2(u))
  \end{pmatrix}.
\end{align*}
We compute, with the entropy density \eqref{1.h},
\begin{align*}
  h''(u)A(u) = \begin{pmatrix}
  2q_{11} + 2q_{12}u_2 & q_{12}u_1 \\
  q_{12}u_2 & 2q_{22} + 2q_{12}u_1
  \end{pmatrix}.
\end{align*}
The symmetric part of this matrix is positive definite, since its determinant can be estimated (in an non-optimal way) as
\begin{align*}
  \det&\bigg(\frac12\big((h''(u)A(u))^T+h''(u)A(u)\big)\bigg) \\
  &= 4q_{11}q_{22} + 4q_{12}(q_{11}u_1+q_{22}u_2)
  + \frac72 q_{12}^2u_1u_2 
  - \frac14q_{12}^2(u_1^2+u_2^2) \\
  &\ge 4q_{11}q_{22} - \frac14 q_{12}^2 > 0.
\end{align*}
Thus, Hypotheses (H3) and (H4) are fulfilled with $s=1$. In particular, the conditions of the boundedness-by-entropy method of \cite{Jue15} are satisfied, and we conclude the existence of a global bounded weak solution $u$. The mobility matrix $B(u)=A(u)h''(u)^{-1}$ has the property $B_{ij}(u)=u_iu_j\beta_{ij}(u)$ for $i,j=1,2$, where $\beta_{ij}(u)$ are polynomials of order at most three. Moreover, a computation shows that 
\begin{align*}
  \bar{B}_{0j}(\bar{u}) = -u_0u_j\beta_{0j}(u), \quad
  \bar{B}_{j0}(\bar{u}) = -u_0u_j\beta_{j0}(u), \quad
  \bar{B}_{00}(\bar{u}) = u_0^2\beta_{00}(u),
\end{align*}
where $j=1,2$ and $\beta_{ij}$ are as before polynomials of order at most three. This yields
\begin{align*}
  \bigg|\frac{\bar{B}_{ij}(\bar{u})}{u_i}
  - \frac{\bar{B}_{ij}(\bar{v})}{v_i}\bigg|
  = |u_j\beta_{ij}(u)-v_j\beta_{ij}(v)| 
  \le C\sum_{i=0}^2|u_i-v_i| 
  \quad\mbox{for }\bar{u},\bar{v}\in\simplex_0,
\end{align*}
where the constant $C>0$ depends on $(q_{ij})$. This shows Hypothesis (H5). 


\subsection{Tumor-growth model}

Jackson and Byrne have derived a tumor-growth model, which describes the dynamics of the volume fractions for the tumor cells $u_1$ and the extra-cellular matrix $u_2$ \cite{JaBy02}. The cross-diffusion system equals \eqref{1.eq} with the diffusion matrix
\begin{align*}
  A(u) = \begin{pmatrix}
  2u_1(1-u_1) - \beta\theta u_1u_2^2 & -2\beta u_1u_2(1+\theta u_1) \\
  -2u_1u_2 + \beta\theta(1-u_2)u_2^2 & 2\beta u_2(1-u_2)(1+\theta u_1)
  \end{pmatrix},
\end{align*}
where $\beta>0$ and $\theta>0$ are parameters in the partial pressures. The variable $u_0=1-u_1-u_2$ is the volume fraction of the interstitial fluid, which is primarily composed of water. In fact, this model follows from the multiphase approach with $q_1(u)=u_1$ and $q_2(u)=\beta u_2(1+\theta u_1)$, but compared to the previous subsection, the pressure functions satisfy the ``non-symmetry'' property $\pa^2 q_1/(\pa u_1\pa u_2)\neq \pa q_2/(\pa u_1\pa u_2)$. The global existence of a weak solution was proved in \cite{JuSt12} assuming that $\theta<4/\sqrt{\beta}$. Under this condition, it is shown in \cite[Sec.~3]{JuSt12} that Hypothesis (H3) is satisfied with $s=1$. We compute
\begin{align*}
  h''(u)A(u) = \begin{pmatrix}
  2 & 0 \\ \beta\theta u_2 & 2\beta(1+\theta u_1)
  \end{pmatrix}.
\end{align*}
We infer that Hypothesis (H4) is satisfied. To verify Hypothesis (H5), we need to compute the entries of the matrix $\bar{B}(\bar{u})$, defined in \eqref{1.barB}. A calculation shows that $\bar{B}_{ij}(\bar{u})=u_iu_j\beta_{ij}(u)$ for $i,j=0,1$, where $\beta_{ij}$ is a polynomial of order at most three. Hence, as in the previous subsection, Hypothesis (H5) is satisfied, and weak--strong uniqueness follows.


\subsection{Modified Busenberg--Travis model}

The model \cite{BuTr83} $\pa_t u_i=\diver(u_i\na p_i(u))$ with $p_i(u)=\sum_{j=1}^n p_{ij}u_j$ models the evolution of segregating population species. Assuming that $(p_{ij})$ is symmetric positive definite, the existence of global weak solutions was shown in \cite{JPZ22}, and the boundedness of the weak solutions was proved in \cite{LaMa23} for two species. It is not known whether the general $n$-species model possesses {\em bounded} weak solutions. To achieve such solutions, we modify the equations:
\begin{align*}
  \pa_t u_i = \diver\bigg(u_i\na p_i(u) 
  - \delta u_i\sum_{j=1}^n u_j\na p_j(u)\bigg), \quad i=1,\ldots,n.
\end{align*}
If $\delta=0$, we recover the Busenberg--Travis system (which, strictly speaking, was suggested for $(p_{ij})$ having rank one). If $\delta=1$, the equations can be derived from a fluid model in a multiphase approach \cite{JRX25}. We argue below that the solutions are bounded. In the following, let $\delta=1$. The diffusion matrix is given by its entries
\begin{align*}
  A_{ij}(u) = u_i(p_{ij} - p_j(u)), \quad i,j=1,\ldots,n.
\end{align*}
With the entropy density as in \eqref{1.h}, we obtain $(h''(u)A(u))_{ij}=p_{ij}$. Hence, $h''(u)A(u)$ is positive definite by assumption. Thus, Hypotheses (H3) and (H4) hold with $s=1$. In particular, we can apply the boundedness-by-entropy method of \cite{Jue16} and obtain the existence of a global bounded weak solution. A computation shows that the mobility matrix can be factorized according to $B_{ij}(u)=u_i\beta_{ij}(u)$ for some polynomial $\beta_{ij}$ with $i,j=1,\ldots,n$. Moreover, the augmented mobility matrix satisfies 
\begin{align*}
  \bar{B}_{0j}(\bar{u}) = -u_0u_j\beta_{0j}(u), \quad
  \bar{B}_{j0}(\bar{u}) = -u_0u_j\beta_{j0}(u), \quad
  \bar{B}_{00}(\bar{u}) = u_0^2\beta_{00}(u),
\end{align*}
where $j=1,\ldots,n$ and $\beta_{ij}$ are polynomials. This shows that Hypothesis (H5) is fulfilled.


\subsection{Two-species Maxwell--Stefan system}

The Maxwell--Stefan equations describe the diffusive transport of the components of gaseous mixtures. The model consists of the mass and force balance equations
\begin{align*}
  \pa_t u_i + \diver J_i = 0, \quad 
  \na u_i = -\sum_{j=0}^n d_{ij}(u_jJ_i-u_iJ_j), \quad i=0,\ldots,n,
\end{align*}
where $u_i$ are the volume fractions of the gas components, $J_i$ are the partial fluxes, and $d_{ij}$ are symmetric diffusion coefficients. For illustration, we consider only the case $n=2$. Indeed, in this case, th inversion of the gradient--flux relations yields rather easy expressions, and we end up with equations \eqref{1.eq} and the diffusion matrix
\begin{align*}
  A(u) = \frac{1}{a(u)}\begin{pmatrix}
  d_{02} + (d_{12} - d_{02})u_1 & (d_{12} - d_{01})u_1 \\
  (d_{12} - d_{02})u_2 & d_{01} + (d_{12} -d_{01})u_2 \\
  \end{pmatrix},
\end{align*}
where $a(u) = d_{01}d_{02}u_0+d_{01}d_{12}u_1+d_{02}d_{12}u_2$ is strictly positive for $u\in\simplex$. It is proved in \cite[Lemma 3.2]{JuSt12} that Hypothesis (H3) is satisfied with $s=1/2$. The entries of the matrix $B(u)=A(u)h''(u)^{-1}$ satisfy
\begin{align*}
  \bar{B}_{ii}(\bar{u}) = \frac{u_i}{a(u)}\beta_{ii}(u)
  \quad\mbox{for }i=0,1,2, \quad
  \bar{B}_{ij}(\bar{u}) = \frac{u_iu_j}{a(u)}\beta_{ij}(u)
  \quad\mbox{for }i\neq j,
\end{align*}
and $\beta_{ij}$ are affine or quadratic polynomials. We conclude that Hypotheses (H4) and (H5) are fulfilled. Hence, the weak--strong uniqueness property of Theorem \ref{thm.wsu} holds. Notice that this property was already proved in \cite{HJT22}, even for the $n$-species model. 


\subsection{Thin-film solar-cell model}

Thin films for solar cells can be fabricated by physical vapor deposition, which is a process in which material is vaporized in a vacuum and then condensed onto a substrate to form a thin film. The diffusion of the material components can be described by the equations
\begin{align*}
  \pa_t u_i = \diver\bigg(\sum_{j=0}^n a_{ij}(u_j\na u_i-u_i\na u_j)
  \bigg), \quad i=0,\ldots,n,
\end{align*}
where $u_i$ are the volume fractions of the components and $a_{ij}=a_{ji}$. The model was derived in \cite{BaEh18} from a lattice hopping model, and the existence of a global bounded weak solution was proved. The diffusion matrix has the entries
\begin{align*}
  A_{ii}(u) &= \sum_{k=1,\,k\neq i}^n(a_{ik}-a_{i0})u_k + a_{i0}
  \quad\mbox{for }i=1,\ldots,n, \\
  A_{ij}(u) &= -(a_{ij}-a_{i0})u_i\quad\mbox{for }i\neq j.
\end{align*}
It is shown in \cite[Lemma 2.3]{BaEh18} that Hypothesis (H3) is satisfied with $s=1/2$. We compute the entries of the augmented mobility matrix:
\begin{align*}
  \bar{B}_{ii}(\bar{u}) 
  &= u_i\bigg(\sum_{k=1,\,k\neq i}^n a_{ik}u_k + a_{i0}u_0\bigg) \ \mbox{for }i=1,\ldots,n,  \\
  \bar{B}_{ij}(\bar{u}) &= -a_{ij}u_iu_j\quad\mbox{for }i\neq j,\  
  i,j=0,\ldots,n, \quad
  \bar{B}_{00}(\bar{u}) = u_0\sum_{k=1}^n a_{k0}u_i.
\end{align*}
We infer that $\bar{B}_{ij}(\bar{u})/\sqrt{u_iu_j}$ is bounded and $\bar{u}\mapsto\bar{B}_{ij}(\bar{u})/u_i$ is Lipschitz continuous. Hence, Hypotheses (H4) and (H5) are satisfied and the weak--strong uniqueness property holds. This feature was already proved in \cite{HoBu22}. 


\section{Extensions}\label{sec.ext}

We show that our approach can be extended to more general situations.

\subsection{Reaction rates}

We claim that Theorem \ref{thm.wsu} still holds when we include suitable reaction terms. We consider 
\begin{align*}
  \pa_t u_i = \diver\bigg(\sum_{j=1}^n A_{ij}(u)\na u_j\bigg)
  + r_i(u), \quad i=1,\ldots,n,
\end{align*}
where we assume that $r_i(u)=0$ if $u_i=0$ and for all $\bar{u}$, $\bar{v}\in\simplex_0$,
\begin{align*}
  \sum_{i=0}^n(r_i(u)-r_i(v)(\log u_i-\log v_i) 
  \le C_R\sum_{i=0}^n\bigg(u_i\log\frac{u_i}{v_i}-u_i+v_i\bigg)
\end{align*}
for some $C_R>0$, where $r_0(u):=-\sum_{i=1}^n r_i(u)$. We obtain as in Section \ref{sec.wsu}:
\begin{align*}
  & H(u(t)|v(t)) - H(u(0)|v(0)) \le I_1 + I_2 + I_3, \quad\mbox{where} \\
  & I_3 = -\sum_{i=0}^n\int_0^t\int_\Omega  \bigg(r_i(u)\log\frac{u_i}{v_i} - r_i(v)\frac{u_i}{v_i}\bigg)dxdx,
\end{align*}
and $I_1$ and $I_2$ are given by \eqref{3.I12}. We set $r_i(u)\log u_i=0$ if $u_i=0$. We only need to estimate $I_3$. For this, we write
\begin{align*}
  I_3 &= -\sum_{i=0}^n\int_0^t\int_\Omega
  \bigg\{(r_i(u)-r_i(v))(\log u_i-\log v_i) 
  + r_i(v)\bigg(\log\frac{u_i}{v_i}-\frac{u_i}{v_i}\bigg)\bigg\}dxds \\
  &\le C_R\sum_{i=0}^n\int_0^t H(u|v)ds
  + \sum_{i=0}^n\int_0^t\int_\Omega
  \frac{r_i(v)}{v_i}\bigg(v_i\log\frac{v_i}{u_i} - v_i + u_i\bigg)dxds,
\end{align*}
where the last step follows from $\sum_{i=0}^n r_i(v)=0$. We compute as in \cite[Lemma 16]{HJT22}:
\begin{align*}
  0 &\le v_i\log\frac{v_i}{u_i} - v_i + u_i 
  = (u_i-v_i)^2\int_0^1\int_0^\theta\frac{dsd\theta}{su_i+(1-s)v_i} \\
  &\le \frac{1}{q}(u_i-v_i)^2\int_0^1\int_0^\theta\frac{dsd\theta}{1-s}
  = C(q)(u_i-v_i)^2,
\end{align*}
where $C(q)>0$ depends on the minimal value $q>0$ of $v_i$. This shows that 
\begin{align*}
  I_3\le C_R\int_0^t H(u|v)ds 
  + C\sum_{i=0}^n\int_0^t\int_\Omega|u_i-v_i|^2 dxds
  \le C\int_0^t H(u|v)ds,
\end{align*}
and we can proceed as in the proof of Theorem \ref{thm.wsu}. 


\subsection{Ion-channel model}

In this example, Hypotheses (H4) and (H5) are not satisfied. The transport of ions in confined channels can be described by a variant of the Nernst--Planck equations. To simplify the situation, we neglect the electric potential generated by the ions and the background charges. This yields the equations
\begin{align*}
  \pa_t u_i + \diver J_i = 0, \quad
  J_i = -d_i(\na u_i - u_i\na\log u_0), \quad i=1,\ldots,n,
\end{align*}
where the unknowns are the volume fractions $u_1,\ldots,u_n$ of the ions and the volume fraction $u_0$ of the solvent. The flux contains linear diffusion and the Bikerman's excess chemical potential $-\log u_0$ \cite[Sec.~3.1.2]{BKSA09}. The entries of the diffusion matrix are $A_{ij}(u)=d_i(\delta_{ij}-u_i/u_0)$ for $i,j=1,\ldots,n$. It is proved in \cite{JuMa24} that Hypothesis (H3) holds with $s=1/2$ and that there exists a global weak solution satisfying $\na\log u_0\in L^2(\Omega_T)$. In spite of the fact that the weak--strong uniqueness property has been shown in \cite{JuMa24}, we present this example, since we believe that it makes the proof more transparent and understandable.

We compute $B_{ij}(u)=d_iu_i\delta_{ij}$ for $i,j=1,\ldots,n$. The augmented mobility matrix has the additional entries
\begin{align*}
  \bar{B}_{0j}(\bar{u}) = -d_ju_j, \quad 
  \bar{B}_{i0}(\bar{u}) = -d_iu_i, \quad
  \bar{B}_{00}(\bar{u}) = \sum_{i=1}^n d_iu_i, \quad i,j=1,\ldots,n.
\end{align*}  
Thus, Hypotheses (H4) and (H5) are {\em not} fulfilled for $i=j=0$. Still, we can apply the relative entropy method (as first done in \cite{JuMa24}). The proof relies crucially of an improved positive definiteness property of the mobility matrix, also yielding an estimate for $\na u_0$. More precisely, the following result, proved in \cite[Lemma 10]{JuMa24}, holds.

\begin{lemma}
The matrix $G(\bar{u})$, defined by $G_{ij}(\bar{u}) = \bar{B}_{ij}(\bar{u})/\sqrt{u_iu_j}$ for $i,j=0,\ldots,n$, satisfies for any $\bar{u}\in\simplex_0$ and $z\in L(\bar{u})$,
\begin{align*}
  z^T G(\bar{u})z \ge c_A\bigg(\sum_{i=1}^n z_i^2 
  + \frac{z_0^2}{u_0}\bigg),
\end{align*}
where $c_A=\min_{i=1,\ldots,n}d_i>0$. 
\end{lemma}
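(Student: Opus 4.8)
The plan is to exploit the fact that for this model the mobility matrix is diagonal, $B_{ij}(u)=d_iu_i\delta_{ij}$, which lets me run the reduction of Lemma \ref{lem.G} and then keep more than its generic conclusion. First I would repeat the algebraic computation from the proof of Lemma \ref{lem.G}: decomposing the sum over $i,j=0,\ldots,n$ into the four blocks and setting $\xi_i:=z_0/\sqrt{u_0}-z_i/\sqrt{u_i}$ for $i=1,\ldots,n$, one obtains
\[
  z^TG(\bar{u})z=\sum_{i,j=1}^n\xi_iB_{ij}(u)\xi_j=\sum_{i=1}^n d_iu_i\xi_i^2,
\]
where the second equality uses that $B$ is diagonal. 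Bounding $d_i\ge c_A=\min_k d_k$ then gives
\[
  z^TG(\bar{u})z\ge c_A\sum_{i=1}^n u_i\xi_i^2
  =c_A\sum_{i=1}^n\Big(z_i-\frac{z_0}{\sqrt{u_0}}\sqrt{u_i}\Big)^2 .
\]

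The core of the argument is to evaluate this last sum using the membership $z\in L(\bar{u})$. Expanding the square yields
\[
  \sum_{i=1}^n\Big(z_i-\frac{z_0}{\sqrt{u_0}}\sqrt{u_i}\Big)^2
  =\sum_{i=1}^n z_i^2-\frac{2z_0}{\sqrt{u_0}}\sum_{i=1}^n\sqrt{u_i}z_i
  +\frac{z_0^2}{u_0}\sum_{i=1}^n u_i .
\]
Here I would substitute the constraint $\sqrt{\bar u}\cdot z=0$, which reads $\sum_{i=1}^n\sqrt{u_i}z_i=-\sqrt{u_0}z_0$, together with $\sum_{i=1}^n u_i=1-u_0$. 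The cross term then becomes $+2z_0^2$ and the last term becomes $(z_0^2/u_0)(1-u_0)=z_0^2/u_0-z_0^2$, so the three contributions collapse to
\[
  \sum_{i=1}^n\Big(z_i-\frac{z_0}{\sqrt{u_0}}\sqrt{u_i}\Big)^2
  =\sum_{i=1}^n z_i^2+\frac{z_0^2}{u_0}+z_0^2 .
\]

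Finally I would discard the nonnegative term $z_0^2$ to arrive at $z^TG(\bar{u})z\ge c_A\big(\sum_{i=1}^n z_i^2+z_0^2/u_0\big)$, as claimed. I do not expect a genuine obstacle: the statement is a sharpening of Lemma \ref{lem.G}, which for $s=1/2$ only yields $c_A\sum_{i=1}^n z_i^2$, and the extra term $z_0^2/u_0$ survives precisely because the diagonal form of $B$ forces no cancellation among the off-diagonal blocks. The one point requiring care is the bookkeeping in the expansion above, where the constraint $z\in L(\bar u)$ must be used to convert the cross term into the favorable $+z_0^2/u_0$ contribution instead of simply bounding it away.
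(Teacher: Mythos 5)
Your proof is correct. A point of context first: the paper does not prove this lemma at all---it simply cites \cite[Lemma 10]{JuMa24}---so there is no in-paper argument to compare against; your proposal supplies a self-contained derivation. Your route is the natural one given the paper's framework: you reuse the four-block factorization from the proof of Lemma \ref{lem.G}, which for general $B$ gives $z^TG(\bar u)z=\sum_{i,j=1}^n\xi_iB_{ij}(u)\xi_j$ with $\xi_i=z_0/\sqrt{u_0}-z_i/\sqrt{u_i}$, and then exploit the diagonal structure $B_{ij}(u)=d_iu_i\delta_{ij}$ to get the exact identity $z^TG(\bar u)z=\sum_{i=1}^n d_iu_i\xi_i^2$. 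The key difference from the generic Lemma \ref{lem.G} (which for this model, $s=1/2$, would only give $c_A\sum_{i=1}^n z_i^2$) is that you expand $u_i\xi_i^2$ exactly and substitute the constraint $\sum_{i=1}^n\sqrt{u_i}z_i=-\sqrt{u_0}z_0$ together with $\sum_{i=1}^n u_i=1-u_0$, rather than discarding the cross terms; this is precisely where the singular weight $z_0^2/u_0$ is recovered. Your bookkeeping checks out: the cross term contributes $+2z_0^2$, the quadratic term contributes $z_0^2/u_0-z_0^2$, and in fact you obtain the slightly stronger bound
\begin{align*}
  z^TG(\bar u)z \ge c_A\bigg(\sum_{i=1}^n z_i^2 + z_0^2 + \frac{z_0^2}{u_0}\bigg),
\end{align*}
from which the stated inequality follows by dropping the nonnegative term $c_Az_0^2$.
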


The additional term $z_0^2/u_0$ allows us to estimate the terms $I_1$ and $I_2$, defined in \eqref{3.I12}. We recall that $Y_i=\sqrt{u_i}\na\log(u_i/v_i)$ and $(P_L(\bar{u})Y)_i=Y_i-\sqrt{u_i}(\sqrt{\bar{u}}\cdot Y)$. We obtain from \eqref{3.I1} that
\begin{align*}
  I_1 &= -\sum_{i,j=0}^n\int_0^t\int_\Omega
  G_{ij}(\bar{u})(P_L(\bar{u})Y)_i(P_L(\bar{u})Y)_j dxds \\
  &\le -c_A\int_0^t\int_\Omega\bigg(\sum_{i=1}^n|(P_L(\bar{u})Y)_i|^2
  + \frac{1}{u_0}|(P_L(\bar{u})Y)_0|^2\bigg)dxds.
\end{align*}
Observe that $\na\log u_0\in L^2(\Omega_T)$ implies that $u_0^{-1}|(P_L(\bar{u})Y)_i|^2$ is integrable. For the second term $I_2$, we exploit the particular structure of $\bar{B}(\bar{u})$. Indeed, since
\begin{align*}
  \frac{\bar{B}_{ij}(\bar{u})}{u_i} - \frac{\bar{B}_{ij}(\bar{v})}{v_i}
  = 0 \quad\mbox{for }i=1,\ldots,n,\ j=0,\ldots,n,
\end{align*}
it remains to estimate $\bar{B}_{0j}(\bar{u})$ for $j=0,\ldots,n$:
\begin{align*}
  I_2 = -\sum_{j=0}^n\int_0^t\int_\Omega
  \bigg(\frac{\bar{B}_{0j}(\bar{u})}{u_0} 
  - \frac{\bar{B}_{0j}(\bar{v})}{v_0}\bigg)\na\log v_j
  \cdot(\sqrt{u_0}Y_0)dxds.
\end{align*}
We split the sum into the two parts $j=0$ and $j=1,\ldots,n$ and insert the definition $\bar{B}_{00}(\bar{u}) = -\sum_{j=1}^n\bar{B}_{0j}(\bar{u})$ as well as $\bar{B}_{0j}(\bar{u})=d_ju_j$:
\begin{align*}
  \sum_{j=0}^n&\bigg(\frac{\bar{B}_{0j}(\bar{u})}{u_0}
  - \frac{\bar{B}_{0j}(\bar{v})}{v_0}\bigg)\na\log v_j \\
  &= \bigg(\frac{\bar{B}_{00}(\bar{u})}{u_0} 
  - \frac{\bar{B}_{00}(\bar{v})}{v_0}\bigg)\na\log v_0
  + \sum_{j=1}^n\bigg(\frac{\bar{B}_{0j}(\bar{u})}{u_0} 
  - \frac{\bar{B}_{0j}(\bar{v})}{v_0}\bigg)\na\log v_j \\
  &= \sum_{j=1}^n\bigg(\frac{\bar{B}_{0j}(\bar{u})}{u_0} 
  - \frac{\bar{B}_{0j}(\bar{v})}{v_0}\bigg)\na\log\frac{v_j}{v_0}
  = \sum_{j=1}^n d_j\bigg(\frac{u_j}{u_0} - \frac{v_j}{v_0}\bigg)
  \na\log\frac{v_j}{v_0}.
\end{align*}
It follows from
\begin{align*}
  \frac{u_j}{u_0} - \frac{v_j}{v_0}
  = \frac{1}{u_0}\bigg((u_j-v_j) - (u_0-v_0)\frac{v_j}{v_0}\bigg)
\end{align*}
that 
\begin{align*}
  I_2 = -\sum_{j=0}^n\int_0^t\int_\Omega d_j
  \bigg((u_j-v_j) - (u_0-v_0)\frac{v_j}{v_0}\bigg)
  \na\log\frac{v_j}{v_0}\cdot\frac{Y_0}{\sqrt{u_0}}dxds.
\end{align*}
As in the proof of Theorem \ref{thm.wsu}, we split $Y_0=(P_L(\bar{u})Y)_0 + (P_{L^\perp}(\bar{u})Y)_0$, i.e.\ $I_2=I_{21}+I_{22}$, where
\begin{align*}
  I_{21} &= -\sum_{j=0}^n\int_0^t\int_\Omega d_j
  \bigg((u_j-v_j) - (u_0-v_0)\frac{v_j}{v_0}\bigg)
  \na\log\frac{v_j}{v_0}\cdot\frac{(P_L(\bar{u})Y)_0}{\sqrt{u_0}}dxds, \\
  I_{22} &= -\sum_{j=0}^n\int_0^t\int_\Omega d_j
  \bigg((u_j-v_j) - (u_0-v_0)\frac{v_j}{v_0}\bigg)
  \na\log\frac{v_j}{v_0}\cdot
  \frac{(P_{L^\perp}(\bar{u})Y)_0}{\sqrt{u_0}}dxds.
\end{align*}
The term $I_{22}$ is estimated as in \eqref{3.I22}:
\begin{align*}
  I_{22} \le C\sum_{i=0}^n\int_0^t\int_\Omega|u_i-v_i|^2 dxds,
\end{align*}
where $C>0$ depends on the $L^\infty(\Omega_T)$ norm of $\na\log v_j$. We apply Young's inequality to the term $I_{21}$:
\begin{align*}
  I_{21} \le \frac{c_A}{2}\sum_{j=0}^n\int_0^t\int_\Omega\frac{1}{u_0}
  |(P_L(\bar{u})Y)_0|^2 dxds
  + C\sum_{j=0}^n\int_0^t\int_\Omega|u_j-v_j|^2 dxds,
\end{align*}
and the first term on the right-hand side is absorbed by $I_1$. We conclude that
\begin{align*}
  H(u(t)|v(t)) &- H(u(0)|v(0)) = I_1 + I_2 \\
  &\le C\sum_{j=0}^n\int_0^t\int_\Omega|u_j-v_j|^2 dxds
  \le C\int_0^t H(u|v)ds,
\end{align*}
and Gronwall's lemma shows that $H(u(t)|v(t))=0$ and hence $u(t)=v(t)$ for $t>0$.

\begin{appendix}
\section{Entropy inequality}\label{sec.app}

We show that the weak solution constructed in \cite[Sec.3]{Jue15} satisfies the entropy inequality \eqref{1.ei} if Hypothesis (H4) holds. 

\begin{lemma}[Entropy inequality]
There exists a weak solution $u$ to \eqref{1.eq}--\eqref{1.bic} in the sense of Definition \ref{def.weak} fulfilling the entropy inequality \eqref{1.ei}. 
\end{lemma}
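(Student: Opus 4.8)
The plan is to revisit the approximation scheme behind \cite[Sec.~3]{Jue15}, to read off a \emph{discrete} entropy inequality from the construction, and to pass to the limit, the only genuinely delicate point being the lower semicontinuity of the entropy-production term under Hypothesis (H4). First I would recall the scheme: the problem is semi-discretized in time by the implicit Euler method with step $\tau>0$ and regularized by a higher-order term $\eps\big(\sum_{|\alpha|=m}\int_\Omega D^\alpha w\cdot D^\alpha\phi+\int_\Omega w\cdot\phi\big)$ in the entropy variables $w=(w_1,\dots,w_n)$, $w_i=\log(u_i/u_0)$, with $m>d/2$ so that $H^m(\Omega)\hookrightarrow L^\infty(\Omega)$; the Leray--Schauder theorem yields approximate solutions $u^{\eps,\tau}$ with $u^{\eps,\tau}(x,t)\in\overline{\simplex}$. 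Testing the time-discrete equations with $w_i$ and using the convexity estimate $h(u^k)-h(u^{k-1})\le h'(u^k)\cdot(u^k-u^{k-1})$ gives, after summation over the time steps, the discrete entropy inequality
\[
  \int_\Omega h(u^{\eps,\tau}(t))\,dx + \int_0^t\int_\Omega E^{\eps,\tau}\,dx\,ds + (\text{nonnegative }\eps\text{-terms}) \le \int_\Omega h(u^{0,\eps,\tau})\,dx,
\]
where $E^{\eps,\tau}=\sum_{i,j=0}^n\bar B_{ij}(\bar u^{\eps,\tau})\na\log u_i^{\eps,\tau}\cdot\na\log u_j^{\eps,\tau}$ is the production written in the augmented variables; the bilinear form in $w$ coincides with this augmented one by the very construction \eqref{1.barB} of $\bar B$.

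The a priori bounds furnished by Hypothesis (H3) control $\|\na(u_i^{\eps,\tau})^s\|_{L^2(\Omega_T)}$, so that an Aubin--Lions argument gives, along a subsequence, $u_i^{\eps,\tau}\to u_i$ a.e.\ and strongly in $L^2(\Omega_T)$ and $\na(u_i^{\eps,\tau})^s\rightharpoonup\na u_i^s$ weakly in $L^2(\Omega_T)$, with $u$ a weak solution in the sense of Definition \ref{def.weak}. Using \eqref{1.int}, I would rewrite $E^{\eps,\tau}=\sum_{i,j=0}^n M_{ij}(\bar u^{\eps,\tau})\na(u_i^{\eps,\tau})^s\cdot\na(u_j^{\eps,\tau})^s$ with $M_{ij}(\bar u)=s^{-2}\bar B_{ij}(\bar u)/(u_i^su_j^s)$, which is bounded on $\overline{\simplex}_0$ by Hypothesis (H4) and continuous on $\simplex_0$; since $u^{\eps,\tau}\to u$ a.e.\ and $\na u_i^s=0$ a.e.\ on the degeneracy set $\{u_i=0\}$, the coefficients may be treated as continuous wherever the gradients do not vanish.

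The crucial structural observation is that the integrand $\zeta\mapsto\sum_{ij}M_{ij}(\bar u)\zeta_i\cdot\zeta_j$, $\zeta\in(\R^d)^{n+1}$, is positive semidefinite: one has $M=s^{-2}DG(\bar u)D$ with $D=\mathrm{diag}(u_i^{1/2-s})$, and recalling from \eqref{2.kerG} that $L(\bar u)^\perp\subset\ker G(\bar u)\cap\ker G(\bar u)^T$, Lemma \ref{lem.G} shows that the symmetric part of $G(\bar u)$ is positive semidefinite on all of $\R^{n+1}$, a property preserved under the congruence $G\mapsto DGD$. Hence the linearization estimate
\[
  \sum_{ij}M_{ij}\,\zeta_i\cdot\zeta_j \ge 2\sum_{ij}M_{ij}\,\zeta_i\cdot\eta_j - \sum_{ij}M_{ij}\,\eta_i\cdot\eta_j \quad\text{for all }\eta
\]
holds (for the symmetrized, hence the original, $M$). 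Choosing $\zeta_i=\na(u_i^{\eps,\tau})^s$ and $\eta_i=\na u_i^s$, the cross term converges by pairing the strongly convergent $M_{ij}(\bar u^{\eps,\tau})\eta_j$ against the weakly convergent $\na(u_i^{\eps,\tau})^s$, and the last term converges by dominated convergence, which yields $\liminf\int_0^t\int_\Omega E^{\eps,\tau}\ge\int_0^t\int_\Omega\sum_{ij}M_{ij}(\bar u)\na u_i^s\cdot\na u_j^s$. Since $h$ is bounded and continuous on $\overline{\simplex}$, dominated convergence gives $\int_\Omega h(u^{\eps,\tau}(t))\to\int_\Omega h(u(t))$ for a.e.\ $t$, and the initial data can be prepared so that $\int_\Omega h(u^{0,\eps,\tau})\to\int_\Omega h(u^0)$; dropping the nonnegative $\eps$-terms and taking $\liminf$ in the discrete inequality then yields \eqref{1.ei}. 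I expect the passage to the $\liminf$ of the production to be the main obstacle: because $\bar B$ is neither symmetric nor positive definite, lower semicontinuity is not automatic and rests precisely on the positive semidefiniteness of the congruent form $DG(\bar u)D$ supplied by Lemma \ref{lem.G}, together with the integrability of the coefficients guaranteed by Hypothesis (H4).
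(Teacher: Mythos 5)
Your proposal is correct and follows the same skeleton as the paper's own proof: the time-discrete, $H^m$-regularized scheme of \cite{Jue15}, the approximate entropy inequality inherited from that construction, and a weak lower-semicontinuity argument for the quadratic entropy production. The difference lies in how that last, crucial step is executed. The paper rewrites the production in the original variables as $\sum_{i,j=1}^n D_{ij}(u^{(\tau)})\nabla u_i^{(\tau)}\cdot\nabla u_j^{(\tau)}$ with $D=h''A$, gets convexity in the gradients directly from Hypothesis (H3), invokes Tonelli's theorem \cite[Sec.~8.2]{Eva10} for the liminf, and only at the very end converts the limit functional into the augmented form appearing in \eqref{1.ei}. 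You instead stay in the augmented variables throughout: you factor the production as $\sum_{i,j=0}^n M_{ij}(\bar u)\nabla u_i^s\cdot\nabla u_j^s$ with $M=s^{-2}\mathrm{diag}(u_i^{1/2-s})\,G(\bar u)\,\mathrm{diag}(u_j^{1/2-s})$, deduce nonnegativity of the quadratic form of $G(\bar u)$ on all of $\R^{n+1}$ from Lemma \ref{lem.G} combined with the kernel identities \eqref{2.kerG} (a correct observation the paper never makes explicit, since it never needs $G$ off the subspace $L(\bar u)$), and prove lower semicontinuity by hand via the expansion of $Q(\zeta-\eta)\ge 0$ and weak--strong pairing of the cross terms against $\nabla(u_i^{(\tau)})^s\rightharpoonup\nabla u_i^s$. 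Your route is more self-contained (no abstract lower-semicontinuity theorem) and makes visible exactly where (H4) and the degenerate coercivity enter; the paper's route is shorter because convexity is immediate in the $D=h''A$ form. Three minor points. First, Hypothesis (H4) is a disjunction and your argument uses alternative (ii); under alternative (i) you should run the identical expansion with the bounded coefficients $(h''(u)A(u))_{ij}u_i^{1-s}u_j^{1-s}$ and the gradients $\nabla u_i^s$, whose quadratic form is nonnegative by (H3) via the same congruence -- worth a sentence. Second, since $M$ need not be symmetric, the cross term in your linearization must be the symmetrized one $\sum_{i,j}M_{ij}(\zeta_i\cdot\eta_j+\eta_i\cdot\zeta_j)$ rather than $2\sum_{i,j}M_{ij}\zeta_i\cdot\eta_j$; this changes nothing in the convergence argument, as each summand still pairs a strongly convergent factor with a weakly convergent one. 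Third, the a.e.\ convergence of $M_{ij}(\bar u^{(\tau)})$ on the set where the limit $u$ touches $\partial\simplex_0$ is glossed over, but the paper's appeal to Tonelli's theorem with coefficients $D_{ij}(u)$ continuous only on the open simplex sits at the same level of rigor, so this is not a gap relative to the paper.
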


\begin{proof}
Let $T>0$, $N\in\N$, let $\tau=T/N$ be a time step size, and $\{t_k=k\tau:k=0,\ldots,N\}$ be a partition of the time intervall $[0,T]$. We consider the approximate problem
\begin{align}\label{a.utau}
  \frac{1}{\tau}\int_0^T\int_\Omega\big(u^{(\tau)}(t)
  - u^{(\tau)}(t-\tau)\big)\cdot\phi dxdt
  &+ \sum_{i,j=1}^n\int_0^T\int_\Omega B_{ij}(u^{(\tau)})
  \na w_j^{(\tau)}\cdot\na\phi_i dxdt \\
  &+ \eps b(w^{(\tau)},\phi) = 0 \nonumber 
\end{align}
for all piecewise constant functions $\phi:(0,T)\to H^m(\Omega;\R^n)$, where $u^{(\tau)}(t)=u^0$ if $-\tau<t<0$, $w_i^{(\tau)}=\log(u_i^{(\tau)}/u_0^{(\tau)})$, $\eps>0$, and
\begin{align*}
  b(w^{(\tau)},\phi) = \int_0^T\int_\Omega\bigg(\sum_{|\alpha|=m}
  \mathrm{D}^\alpha w^{(\tau)}\cdot\mathrm{D}^\alpha\phi  
  + w^{(\tau)}\cdot\phi\bigg)dxdt
\end{align*}
is a regularizing bilinear form. Here, $m\in\N$ is such that $H^m(\Omega)\hookrightarrow L^\infty(\Omega)$,  $\alpha=(\alpha_1,\ldots,\alpha_d)\in\N^d$ is a multi-index, and $\mathrm{D}^\alpha$ denotes the partial derivative $\pa^{|\alpha|}/(\pa x_1^{\alpha_1}\cdots\pa x_d^{\alpha_d})$. Under Hypotheses (H1)--(H3), it is proved in \cite[Sec.~3]{Jue15} that there exists a weak solution $u^{(\tau)}\in L^2(0,T;H^m(\Omega;\R^n))$ to \eqref{a.utau}, which is piecewise constant in time (i.e., $u_i^{(\tau)}$ is constant in time in every subinterval $(t_{k-1},t_k]$), $u^{(\tau)}(x,t)\in\simplex$ for a.e.\ $(x,t)\in\Omega_T$, and the approximate entropy inequality
\begin{align}\label{a.ei}
  \int_\Omega h(u^{(\tau)}(t))dx 
  &+ \sum_{i,j=1}^n\int_0^t\int_\Omega B_{ij}(u^{(\tau)})
  \na w_i^{(\tau)}\cdot\na w_j^{(\tau)} dxds \\
  &+ \eps b(w^{(\tau)},w^{(\tau)}) \le \int_\Omega h(u^0)dx \nonumber 
\end{align}
holds. The resulting uniform bounds and the time-discrete version of the Aubin--Lions compactness lemma yield the following convergences as $(\eps,\tau)\to 0$:
\begin{align*}
  u_i^{(\tau)} \to u_i &\quad\mbox{strongly in }L^2(\Omega_T), \\
  \na(u_i^{(\tau)})^s\rightharpoonup\na u_i^s
  &\quad\mbox{weakly in }L^2(\Omega_T), \\
  \na u_i^{(\tau)}\rightharpoonup\na u_i
  &\quad\mbox{weakly in }L^2(\Omega_T).
\end{align*}
The limit inferior $(\eps,\tau)\to 0$ in \eqref{a.ei} then yields
\begin{align*}
  \int_\Omega h(u(t))dx + \liminf_{\tau\to 0}
  \sum_{i,j=1}^n\int_0^t\int_\Omega B_{ij}(u^{(\tau)})
  \na w_i^{(\tau)}\cdot\na w_j^{(\tau)} dxds
  \le \int_\Omega h(u^0)dx.
\end{align*}
It remains to estimate the limit inferior. We claim that 
\begin{align*}
  \liminf_{\tau\to 0}
  \sum_{i,j=1}^n\int_0^t\int_\Omega B_{ij}(u^{(\tau)})
  \na w_i^{(\tau)}\cdot\na w_j^{(\tau)} dxds
  \ge \sum_{i,j=1}^n\int_0^t\int_\Omega D_{ij}(u)
  \na u_i\cdot\na u_j dxds,
\end{align*}
where $D(u):=h''(u)A(u)$. To prove this result, we use $\na w_i^{(\tau)} = h''(w_i^{(\tau)})\nabla u_i^{(\tau)}$ and the notation ``:'' for the Frobenius matrix norm,
\begin{align*}
  \sum_{i,j=1}^n & B_{ij}(u^{(\tau)})
  \na w_i^{(\tau)}\cdot\na w_j^{(\tau)} 
  = \na h'(u^{(\tau)})^T:A(u^{(\tau)})
  h''(u^{(\tau)})^{-1}\na h'(u^{(\tau)}) \\
  &= \na (u^{(\tau)})^T:h''(u^{(\tau)})A(u^{(\tau)})\na u^{(\tau)}
  = \sum_{i,j=1}^n D_{ij}(u^{(\tau)})\na u_i^{(\tau)}
  \cdot\na u_j^{(\tau)}.
\end{align*}
Since $(D_{ij}(u^{(\tau)}))$ is positive semidefinite in the sense of Hypothesis (H3), the functional
\begin{align*}
  \mathcal{F}(u^{(\tau)}) = \sum_{i,j=1}^n\int_0^t\int_\Omega
   D_{ij}(u^{(\tau)})\na u_i^{(\tau)}\cdot\na u_j^{(\tau)}dxds
\end{align*}
is bounded from below and convex on $L^2(0,T;H^1(\Omega))$. Therefore, we can apply Tonelli's theorem in the version of \cite[Sec.~8.2]{Eva10} and conclude that $\mathcal{F}$ is weakly lower semicontinuous. This yields
\begin{align*}
  \liminf_{\tau\to 0}\mathcal{F}(u^{(\tau)})\ge \mathcal{F}(u)
\end{align*}
and consequently, by definition \eqref{1.barB} of $\bar{B}(\bar{u})$,
\begin{align*}
  \liminf_{\tau\to 0}&
  \sum_{i,j=1}^n\int_0^t\int_\Omega B_{ij}(u^{(\tau)})
  \na w_i^{(\tau)}\cdot\na w_j^{(\tau)} dxds \\
  &\ge \sum_{i,j=1}^n\int_0^t\int_\Omega B_{ij}(u)
  \na\log\frac{u_i}{u_0}\cdot\na\log\frac{u_j}{u_0} dxds \\
  &= \sum_{i,j=0}^n\int_0^t\int_\Omega\bar{B}_{ij}(\bar{u})
  \na\log u_i\cdot\na\log u_j dxds.
\end{align*}
This shows the entropy inequality \eqref{1.ei}. 
\end{proof} 

\end{appendix}


\end{document}